\documentclass[reqno, 12pt]{amsart}

\pdfoutput=1
\makeatletter
\let\origsection=\section \def\section{\@ifstar{\origsection*}{\mysection}} 
\def\mysection{\@startsection{section}{1}\z@{.7\linespacing\@plus\linespacing}{.5\linespacing}{\normalfont\scshape\centering\S}}
\makeatother        

\usepackage{amsmath,amssymb,amsthm}
\usepackage{mathrsfs}
\usepackage{mathtools}
\usepackage{mathabx}\changenotsign
\usepackage{dsfont}
\usepackage{nicefrac}
\usepackage[normalem]{ulem}

\usepackage{graphicx}
\usepackage{xcolor}
\input{_preamble/_colours}

\usepackage[foot]{amsaddr}

\usepackage{verbatim}

\usepackage{enumitem}

\usepackage{hyperref}
\usepackage{cleveref}
\hypersetup{
    colorlinks,
    linkcolor={red!60!black},
    citecolor={green!60!black},
    urlcolor={blue!60!black},
}

\usepackage[open,openlevel=2,atend]{bookmark}

\input{_preamble/myBiblatex}

\defbibheading{bibliography}{%
  \section*{References}%
}

\usepackage[T1]{fontenc}
\usepackage{lmodern}
\usepackage[babel]{microtype}
\usepackage[english]{babel}

\linespread{1.1}
\usepackage{geometry}
\geometry{
    left=27.5mm, 
    right=27.5mm, 
    top=25mm, 
    bottom=25mm,
    marginparwidth=10mm,
    marginparsep=2mm,
}


\numberwithin{equation}{section}
\numberwithin{figure}{section}

\let\polishlcross=\l
\def\l{\ifmmode\ell\else\polishlcross\fi}

\def\paragraph#1{%
  \smallskip
  \noindent\textbf{#1.}\enspace}

\makeatletter
\def\moverlay{\mathpalette\mov@rlay}
\def\mov@rlay#1#2{\leavevmode\vtop{   \baselineskip\z@skip \lineskiplimit-\maxdimen
   \ialign{\hfil$\m@th#1##$\hfil\cr#2\crcr}}}
\newcommand{\charfusion}[3][\mathord]{
    #1{\ifx#1\mathop\vphantom{#2}\fi
        \mathpalette\mov@rlay{#2\cr#3}
      }
    \ifx#1\mathop\expandafter\displaylimits\fi}
\makeatother


\usepackage{thmtools, thm-restate}

\theoremstyle{plain}
\newtheorem{thm}{Theorem}[section]
    \crefname{thm}{Theorem}{Theorems}
\newtheorem{theorem}[thm]{Theorem}
    \crefname{theorem}{Theorem}{Theorems}
\newtheorem{lemma}[thm]{Lemma}
    \crefname{lemma}{Lemma}{Lemmas}
\newtheorem{corollary}[thm]{Corollary}
    \crefname{corollary}{Corollary}{Corollaries}

    \crefname{proposition}{Proposition}{Propositions}

    \crefname{problem}{Problem}{Problems}

    \crefname{conjecture}{Conjecture}{Conjectures}
\newtheorem{observation}[thm]{Observation}
    \crefname{observation}{Observation}{Observations}
\newtheorem{question}[thm]{Question}
    \crefname{question}{Question}{Questions}
\newtheorem*{claim*}{Claim}
\newtheorem{claim}{Claim}[]
    \crefname{claim}{Claim}{Claims}
\newtheorem*{case*}{Case}

    \crefname{case}{Case}{Case}
\newtheorem{thm-intro}{Theorem}[]
    \crefname{thm-intro}{Theorem}{Theorems}
\newtheorem{conj-intro}[thm-intro]{Conjecture}
    \crefname{conj-intro}{Conjecture}{Conjectures}
\newtheorem{question-intro}[thm-intro]{Question}
    \crefname{question-intro}{Question}{Questions}

\theoremstyle{definition}
\newtheorem{definition}[thm]{Definition}
    \crefname{definition}{Definition}{Definitions}

    \crefname{remark}{Remark}{Remarks}

    \crefname{remarks}{Remarks}{Remarks}

    \crefname{situation}{Situation}{Situations}

    \crefname{construction}{Construction}{Constructions}

    \crefname{construction}{Example}{Examples}
\newtheorem*{example*}{Example}

\newenvironment{proofofclaim}[1][Proof.]{%
    \begin{proof}[{#1}]%
        }{%
    \end{proof}}

\DeclareFontFamily{U}  {MnSymbolC}{}
\DeclareSymbolFont{MnSyC}         {U}  {MnSymbolC}{m}{n}
\DeclareFontShape{U}{MnSymbolC}{m}{n}{
    <-6>  MnSymbolC5
   <6-7>  MnSymbolC6
   <7-8>  MnSymbolC7
   <8-9>  MnSymbolC8
   <9-10> MnSymbolC9
  <10-12> MnSymbolC10
  <12->   MnSymbolC12}{}
\DeclareMathSymbol{\powerset}{\mathord}{MnSyC}{180}

\let\emptyset=\varnothing
\let\setminus=\smallsetminus

\newcommand*{\abs}[1]{\ensuremath{{\lvert {#1} \rvert}}}
\newcommand*{\gen}[1]{\ensuremath{{\langle {#1} \rangle}}}

\input{_preamble/myTikz.tex}

\addbibresource{newbib.bib}

\hypersetup{
    pdftitle={Wild generalised truncations of infinite matroids},
    pdfauthor={J. Pascal Gollin, Attila Jo\'{o}}
}

\theoremstyle{plain}
\newtheorem*{martins}{Martin's Axiom}

\title{Wild generalised truncation of infinite matroids}

\author{J.~Pascal Gollin}
\thanks{J.~Pascal Gollin was supported by the Slovenian Research and Innovation Agency (research project N1-0370). Attila Jo\'{o} was supported by the Deutsche Forschungsgemeinschaft (DFG, German Research Foundation) - Grant No. 513023562.}
\address{J.~Pascal Gollin,
FAMNIT, University of Primorska, Koper, Slovenia}
\email{pascal.gollin@famnit.upr.si}
\author{Attila Jo\'{o}}
\address{Attila Jo\'{o},
Department of Mathematics, University of Hamburg, Hamburg, Germany}
\email{attila.joo@uni-hamburg.de}

\keywords{truncation, infinite matroids}
\subjclass[2020]{05B35 (primary); 03E50 (secondary)}

\begin{document}

\begin{abstract}
    For ${n \in \mathbb{N}}$, the $n$-truncation of a matroid~$M$ of rank at least~$n$ is the matroid whose bases are the $n$-element 
    independent sets of~$M$. 
    One can extend this definition to negative integers by letting the $(-n)$-truncation be the matroid whose bases are all the sets that can be obtained by deleting~$n$ elements of a base of~$M$. 
    If~$M$ has infinite rank, then for distinct~${m,n \in \mathbb{Z}}$ the $m$-truncation and the $n$-truncation are distinct matroids. 
    
    Inspired by the work of Bowler and Geschke on infinite uniform matroids, we provide a natural definition of generalised truncations that 
    encompasses the notions mentioned above. 
    We call a generalised truncation wild if it is not an $n$-truncation for any~${n \in \mathbb{Z}}$ 
    and we prove that, under Martin's Axiom, any finitary matroid of infinite rank and size of less than continuum admits~${2^{2^{\aleph_0}}}$ wild generalised truncations. 
\end{abstract}

\maketitle

\section{Introduction}

Searching for a concept of infinite matroids with duality was initiated by Rado~\cite{rado1966abstract}. 
Rado's questions inspired Higgs, Oxley and others to 
investigate possible definitions~\cite{higgs1969matroids, oxley1978infinite}. 
The theory of infinite matroids gained a new momentum when Bruhn et al.~\cite{bruhn2013axioms} rediscovered independently and axiomatised the same infinite matroid concept that was found by Higgs in the late 1960s. 
Going beyond the work of Higgs, they gave five sets of cryptomorphic axiomatisation. 
Their axiomatisation in terms of bases reads as follows. 

A set~${\mathcal{B} \subseteq \mathcal{P}(E)}$ is the \emph{set of basis of a matroid} $M$ on a given ground set~$E$ if
\begin{enumerate}
    [label=(B\arabic*)]
    \item \label{item:B1} $\mathcal{B} \neq \emptyset$;
    \item \label{item:B2} For all~${B_0, B_1 \in \mathcal{B}}$ and all~${x \in B_0 \setminus B_1}$ there exists an element~${y \in B_1 
    \setminus B_0}$ 
    such that ${(B_0 \setminus \{x\}) \cup \{y\} \in \mathcal{B}}$;
\end{enumerate}
\begin{enumerate}
    [label=(BM)]
    \item \label{item:BM} For every~${X \subseteq E}$, the set of maximal elements of the poset ${(\{ X \cap B \colon\ B \in \mathcal{B} \}, 
    \subseteq)}$ form 
    a cofinal subset.
\end{enumerate}

The \emph{truncation} of a matroid~$M$ of non-zero rank is the 
matroid on the same ground set~$E$ 
whose bases are all sets that can be obtained by the deletion of one element of a base of~$M$ (see \cite[Definition 3.1]{bowler2014matroids}). 
If the rank of~$M$ is at least~${n \in \mathbb{N} \setminus \{0\}}$, then truncation can be iterated~$n$ times starting with~$M$. 
Let us call the resulting matroid the \emph{${(-n)}$-truncation} of~$M$. 
Note that if the rank of~$M$ is infinite, then so is the rank of its $(-n)$-truncation. 
For~${n \in \mathbb{N}}$, the \emph{$n$-truncation} of a matroid~$M$ of rank at least~$n$ is the matroid on the same ground set whose bases are the $n$-element independent sets of~$M$. 
Clearly, if~$M$ has infinite rank and~${m,n \in \mathbb{Z} \setminus \{0\}}$ with~${m \neq n}$, then the $m$-truncation and $n$-truncation of~$M$ are different matroids\footnote{Observe that from the viewpoint of the $(-n)$-truncation, the ``$(-0)$-truncation'' could also be defined as the matroid~$M$ itself. To avoid an overload in notation, we do not call~$M$ the $(-0)$-truncation but refer to it as the \emph{trivial} truncation. }.

Our aim of this paper is to find a natural common generalisation of these truncation operations that allows for more flexibility `in between' these concepts in a similar sense as Bowler and Geschke~\cite{bowler2016self} generalised the concept of uniform matroids, as we will discuss further below. 
Let~$E(M)$, $\mathcal{I}(M)$, and $\mathcal{B}(M)$ stand for the ground 
set, independent sets, and the bases of matroid~$M$ respectively. 
We propose the following definition. 

\begin{definition}
    A matroid~$N$ is a \emph{generalised truncation} of matroid~$M$ if 
    \begin{enumerate}[label=(\Roman*)]
        \item \label{item:GT1} ${E(N) = E(M)}$,
        \item \label{item:GT2} ${\mathcal{I}(N) \subseteq \mathcal{I}(M)}$,
        \item \label{item:GT3} for all~${I \in \mathcal{I}(N) \setminus \mathcal{B}(N)}$ and all~${e \in E \setminus I}$, 
            if~${I \cup \{ e \} \in \mathcal{I}(M)}$, 
            then~${I \cup \{ e \} \in \mathcal{I}(N)}$.
    \end{enumerate}
\end{definition}

\noindent
Note that every matroid is a generalised truncation of itself which we call the \emph{trivial} generalised truncation.
One can ask if there are non-trivial generalised truncations other than the $n$-truncation for~${n \in \mathbb{Z}}$. 
The generalised truncations of free matroids are exactly the uniform matroids (in the sense of the definition of Bowler and Geschke~\cite[Definition~2]{bowler2016self}). 
A uniform matroid~$U$ is \emph{wild}\footnote{In general, a 
matroid is called \emph{wild} if it is not \emph{tame}, i.e. it admits a circuit and a cocircuit with infinite intersection.} if neither~$U$ nor its dual has finite rank. 
Using our terminology, a uniform matroid is wild if it is neither a free matroid nor an $n$-truncation of a free matroid for suitable~${n \in \mathbb{Z}}$. 
Wild uniform matroids were constructed by Bowler and Geschke~\cite{bowler2016self} under Martin's Axiom. 
It is unknown if their existence can be proved in {ZFC} alone.  
We will call a non-trivial generalised truncation \emph{wild} if it is not an 
$n$-truncation for any~${n \in \mathbb{Z}}$. 
The main result of this paper reads as follows.

\begin{theorem}
    \label{thm: main Martin}
    Under Martin's Axiom, every finitary matroid~${M}$ of infinite rank on a ground set~$E$ with ${\abs{E} < 2^{\aleph_0}}$ admits a wild generalised truncation. 
\end{theorem}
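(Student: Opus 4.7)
The plan is to generalise the Bowler--Geschke construction of wild uniform matroids~\cite{bowler2016self}, which corresponds to the case where $M$ is the free matroid, to arbitrary finitary matroids of infinite rank. Fix an $M$-basis $B_\star$; since $M$ is finitary, every $e \in E \setminus B_\star$ has a finite fundamental $M$-circuit with $B_\star$. Call $B \subseteq E$ a \emph{candidate basis} if $B \in \mathcal{I}(M)$ and $B \triangle B_\star$ is finite. Exchanges between candidate bases take place inside finite unions of fundamental $M$-circuits, so the relevant combinatorics is essentially the same as in the free case, with only an added $M$-independence constraint to check.

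The next step is to set up a $\sigma$-centred poset $\mathbb{P}$ of finite approximations in which a condition is a finite family $\mathcal{F}$ of candidate bases equipped with committed basis-exchange witnesses: for each ordered pair $B_0, B_1 \in \mathcal{F}$ and each $x \in B_0 \setminus B_1$, a designated $y \in B_1 \setminus B_0$ such that $(B_0 \setminus \{x\}) \cup \{y\} \in \mathcal{F}$. Refinement extends $\mathcal{F}$ and preserves commitments. Three families of dense sets must be met: (a) exchange-completion conditions for every triple $(B_0, B_1, x)$, ensuring~\ref{item:B2}; (b) saturation conditions ensuring axiom~\ref{item:BM} for $N$; and (c) for each $n \in \mathbb{Z}$, a wildness condition forcing the appearance of a candidate basis $B$ with $|B_\star \setminus B|, |B \setminus B_\star| > |n|$, which together prevent $N$ from being an $n$-truncation. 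Conditions~\ref{item:GT1} and~\ref{item:GT2} are automatic since every element of $\mathcal{B}(N)$ is $M$-independent, while~\ref{item:GT3} will follow because the density of~(a) forces every $M$-extension of a non-basis $N$-independent set into some chosen basis. Provided the saturation family~(b) can be kept of size at most $|E| < 2^{\aleph_0}$, Martin's Axiom yields a generic filter meeting all these dense sets, and its union gives the basis family $\mathcal{B}(N)$.

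The main obstacle is handling axiom~\ref{item:BM} for $N$, because $N$ itself need not be finitary and so its maximality property cannot directly be reduced to finite witnesses. The key observation enabling the saturation step is that when all $N$-bases lie in the candidate-basis class, every $N$-independent set differs from some $N$-basis in a finite set, so the saturation data can be coded by dense sets indexed by finite tuples in $E$, of which there are at most $|E|$ many. A secondary point is verifying density of the exchange completions in~(a): given a condition $\mathcal{F}$ and a triple $(B_0, B_1, x)$, a candidate-basis witness $y$ is produced by applying the $M$-basis-exchange axiom inside the finite set $(B_0 \triangle B_\star) \cup (B_1 \triangle B_\star) \cup \{x\}$, and an inductive argument shows the new commitments remain compatible with previous ones. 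Once these two points are established, Martin's Axiom delivers a generic filter producing a wild generalised truncation of $M$.
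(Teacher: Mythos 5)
There is a decisive gap: your candidate-basis restriction destroys wildness from the outset. Since $B_\star$ is a base of $M$, every candidate basis $B$ (i.e.\ $B\in\mathcal{I}(M)$ with $B\bigtriangleup B_\star$ finite) satisfies ${r(M/B)\leq \abs{B_\star\setminus B}<\aleph_0}$, so every base of your matroid $N$ has finite corank in $M$. By the exchange axiom \ref{item:B2} (\Cref{lem: quasi equivcar}) all bases then have the same corank $n$, and the maximality condition (condition~\ref{item: mino axiom works} of \Cref{lem:wildcharacter}, applied with $I=\emptyset$ and $J$ an arbitrary independent set of corank $n$, together with \Cref{lem: finite and almost base}) forces $N$ to be exactly the $(-n)$-truncation of $M$ (or $M$ itself). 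Consequently your dense sets (c) do not do what you claim: bases with $\abs{B_\star\setminus B},\abs{B\setminus B_\star}>\abs{n}$ exist inside every $(-n)$-truncation, so meeting them prevents nothing. Even in the free-matroid case your poset can only ever produce $(-n)$-truncations, which is precisely \emph{not} the Bowler--Geschke construction you set out to generalise: their wild uniform matroids have infinite, co-infinite bases spread over continuum many classes of the ``finite balanced exchange'' relation, not a single class around one fixed base.

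This points to the second, structural problem: a wild truncation needs a base $B$ that is infinite with ${r(M/B)=\infty}$, and then axiom \ref{item:BM} (equivalently condition~\ref{item: mino axiom works}) keeps demanding bases in ever new strong-equivalence classes --- a single $\sim$-class of such a $B$ is never the base set of a matroid. Your saturation step (b) is justified by the claim that every $N$-independent set differs finitely from a base, but that claim holds only in the finite-corank regime that kills wildness; in the wild regime the demands of \ref{item:BM} cannot be coded by $\abs{E}$-many dense sets in one application of Martin's Axiom to a poset of finite approximations. The paper instead runs a transfinite recursion of length $\mathfrak{c}$: it enumerates ``tasks'' $(I_\alpha,J_\alpha)$ (independent $I_\alpha$ co-infinite in $J_\alpha$), and at each step applies Martin's Axiom afresh to the Cohen poset $\mathsf{Fn}(J_\alpha\setminus I_\alpha,2)$ with fewer than $\mathfrak{c}$ dense sets (depending on the classes chosen so far) to produce a new base $B_\alpha$ with ${I_\alpha\subseteq B_\alpha\subseteq J_\alpha}$ whose class is $\trianglelefteq$-incomparable with all previous ones; closing under $\sim$ and iterating yields a family satisfying \Cref{lem:wildcharacter}, and wildness comes from seeding with $\mathcal{F}_0=[B]$ for $B$ infinite and co-infinite in a base (\Cref{thm: main Martin extra}). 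Your proposal would need this iterative, class-by-class mechanism (or some substitute handling \ref{item:BM} across infinitely many classes); as written, the approach cannot produce a wild generalised truncation.
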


\section{Preliminaries}

\subsection{Infinite matroids}


For~${X \subseteq E(M)}$, let~${M{\upharpoonright}X}$ be the matroid on~$X$ where ${\mathcal{B}(M {\upharpoonright} X)}$ consists of the maximal elements of~${\{ X \cap B \colon\ B \in \mathcal{B}(M) \}}$. 
It is known that ${M{\upharpoonright}X}$ is indeed a matroid, and it is called the \emph{restriction} of~$M$ to~$X$. Similarly, ${M.X}$, is the matroid on~$X$ 
where~${\mathcal{B}(M.X)}$ consists of the minimal elements of~${\{ X \cap B \colon\ B \in \mathcal{B}(M) \}}$, and it is called the \emph{contraction} of ~$M$ to~$X$. 
We write~${M \setminus X}$ and~${M/X}$ for~${M{\upharpoonright}(E \setminus X)}$ and~${M.(E \setminus X)}$ respectively. 
Their respective names are the \emph{deletion} and the \emph{contraction} of~$X$ in~$M$. 

If a matroid has a finite base, then all of its bases are of the same size. 
More generally, it follows from \ref{item:B2} that:
\begin{lemma}
    \label{lem: quasi equivcar}
    If~$M$ is a matroid and~${B,B' \in \mathcal{B}(M)}$ with ${\abs{B \setminus B'} < \aleph_0}$, then ${\abs{B' \setminus B} = \abs{B \setminus B'}}$. 
\end{lemma}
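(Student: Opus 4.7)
The plan is to transform $B$ into $B'$ by a finite sequence of single\dash element basis exchanges, each removing one element of $B \setminus B'$ and introducing a distinct element of $B' \setminus B$, using only axiom~\ref{item:B2}.

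Write $n := \abs{B \setminus B'}$ and enumerate $B \setminus B' = \{x_1, \ldots, x_n\}$. I would inductively construct bases $B = B_0, B_1, \ldots, B_n$ together with elements $y_1, \ldots, y_n$ satisfying the invariants $B_i \setminus B' = \{x_{i+1}, \ldots, x_n\}$ and $B_i \setminus B = \{y_1, \ldots, y_i\} \subseteq B'$. At step~$i+1$, apply~\ref{item:B2} to the bases $B_i$ and $B'$ with $x_{i+1} \in B_i \setminus B'$ to obtain some $y_{i+1} \in B' \setminus B_i$ for which $B_{i+1} := (B_i \setminus \{x_{i+1}\}) \cup \{y_{i+1}\} \in \mathcal{B}(M)$. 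The invariants are preserved, and moreover the $y_j$'s are pairwise distinct: since $\{y_1, \ldots, y_i\} \subseteq B_i$ but $y_{i+1} \notin B_i$, we have $y_{i+1} \neq y_j$ for~${j \leq i}$.

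After $n$ steps, $B_n$ is a base with $B_n \subseteq B'$. To conclude $B_n = B'$, assume for contradiction that $B' \setminus B_n \neq \emptyset$, and pick any $x \in B' \setminus B_n$. Applying~\ref{item:B2} to the pair~$(B', B_n)$ with this~$x$ would require an element~${y \in B_n \setminus B'}$; but $B_n \setminus B' = \emptyset$ by construction, contradicting~\ref{item:B2}. Hence $B_n = B'$, and therefore $B' \setminus B = \{y_1, \ldots, y_n\}$, which has cardinality exactly $n$ by the distinctness already observed.

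No real obstacle arises; the argument is a straightforward finite induction on $n = \abs{B \setminus B'}$. The only mildly delicate point is the final step showing~${B_n = B'}$, where one must notice that~\ref{item:B2} itself forces two bases, one contained in the other, to coincide — an observation that is used implicitly throughout infinite matroid theory.
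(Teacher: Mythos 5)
Your proof is correct and is exactly the argument the paper has in mind: the paper states the lemma without proof as a consequence of~\ref{item:B2}, and your finite sequence of single-element exchanges, together with the observation that~\ref{item:B2} forbids one base from being a proper subset of another, is the standard way to flesh that out. No issues.
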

 
It is independent of ZFC if the bases of a fixed matroid have the same cardinality (see~\cite{higgs1969equicardinality} combined with 
\cite[Theorem~15]{bowler2016self}). 
Therefore, in the definition of the rank~$r(M)$ of matroid~$M$ the usage of cardinalities is avoided. 
The \emph{rank}~$r(M)$ of matroid~$M$ is~${n \in \mathbb{N}}$ if it has a base of size~$n$ and~$\infty$ if its bases are infinite. 
For~${X, Y \subseteq E(M)}$, we write~${r_M(X)}$ for~${r(M{\upharpoonright}X)}$ and call it the \emph{rank} of~$X$.
Moreover, ${r_M(X | Y)}$ stands for~${r_{M/Y}(X \setminus Y)}$ and it is 
called the \emph{relative rank} of~$X$ with respect to~$Y$. 
We will make use of the following observation for relative ranks (see~\cite{bruhn2013axioms}).
\begin{enumerate}
    [label=(R3)]
    \item \label{axiom:r3} For all~$C \subseteq B \subseteq A \subseteq E(M)$, we have~$r_M(A|C) = r_M(B|C) + r_M(A|B)$. 
\end{enumerate}
A set~${X \subseteq E}$ \emph{spans} ${e \in E}$ if~${r_M(\{ e \}| X ) = 0}$. 
We write~$\mathsf{span}_M(X)$ for the set of elements spanned by~$X$. 
A matroid~$M$ is \emph{finitary} if whenever all finite subsets of a set~${X \subseteq E}$ are independent in~$M$, then so is~$X$. 
For more about infinite matroids we refer to \cite{nathanhabil}.

\subsection{Martin's Axiom}

Let~${(P, \leq)}$ be a partial order. 
A set~${D \subseteq P}$ is \emph{dense} if every~${p \in P}$ has a lower bound in~$D$, that is there exists a~${d \in D}$ with~${d \leq p}$. 
A set~${A \subseteq P}$ is a \emph{strong antichain} if no two distinct elements of~$A$ have a common lower bound in~$P$. 
We say that~$P$ satisfies the \emph{countable chain condition} (or, $\mathsf{ccc}$, for short) if every strong antichain in~$P$ is countable. 
A non-empty set~${F \subseteq P}$ is a \emph{filter} if
\begin{itemize}
    \item $F$ is downward directed, that is any two distinct elements of~$F$ have a common lower bound in~$F$, and
    \item $F$ is upwards closed, that is for every~${f \in F}$ and every~${p \in P}$, if~${f \leq p}$, then~${p \in F}$. 
\end{itemize}
Let~$\mathfrak{c}$ denote~$2^{\aleph_0}$, the size of the continuum. 
\begin{martins}
    For every partial order~$(P,\leq)$ that satisfies the $\mathsf{ccc}$, every set~$I$ with~${\abs{I} < \mathfrak{c}}$, and every family~${\gen{ 
    D_i \colon i \in I}}$ of dense subsets of~$P$ there exists a filter~$F$ on~$P$ such that~${F \cap D_i}$ is non-empty for every~${i \in I}$. 
\end{martins}

\begin{theorem}[\cite{jech2002set}]
    \label{clm: card ar under Martin}
    Under Martin's Axiom, ${2^{\kappa} = \mathfrak{c}}$ for every cardinal~$\kappa$ with~${\aleph_0 \leq \kappa < \mathfrak{c}}$. 
\end{theorem}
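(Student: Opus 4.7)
The plan is to adapt the ccc forcing construction of Bowler and Geschke~\cite{bowler2016self} for wild uniform matroids to the setting of an arbitrary finitary matroid~$M$ of infinite rank. A generalised truncation~$N$ of~$M$ is determined by its base family ${\mathcal{B}(N) \subseteq \mathcal{I}(M)}$, which must consist of maximal $N$-independent sets satisfying~\ref{item:B1}, \ref{item:B2}, and~\ref{item:BM} for~$N$, along with the inherited truncation property~\ref{item:GT3}. The strategy is to construct such a family generically via Martin's Axiom.

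Define a partial order~${(P, \leq)}$ whose elements are finite approximations to~${\mathcal{B}(N)}$. A condition~${p \in P}$ consists of finitely many finite $M$-independent sets, each labelled either \emph{committed $N$-independent} or \emph{committed $N$-dependent}, together with finite bookkeeping data recording: which of the committed $N$-independent sets are intended to already be bases of~$N$; finitely many pending witnesses of the base exchange axiom~\ref{item:B2}; and finitely many witnesses of the maximality axiom~\ref{item:BM} against specific finite test-sets. Consistency requires that the label classes be appropriately downward and upward closed, that the truncation property~\ref{item:GT3} hold on the recorded data, and that $M$-independence be respected. The order is componentwise refinement. A standard $\Delta$-system argument on the finite supports of conditions shows that~$P$ satisfies the countable chain condition.

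For each~${e \in E}$, each finite ${I \in \mathcal{I}(M)}$, each finite ${X \subseteq E}$, and each ${n \in \mathbb{Z}}$, define a dense subset of~$P$ guaranteeing, respectively: that~$e$ is decided with respect to some base commitment; that base exchange is witnessed on the relevant pair; that~\ref{item:BM} is witnessed on~$X$; and that the resulting~$\mathcal{B}(N)$ differs from the base family of the $n$-truncation of~$M$. Because~$M$ is finitary, the a priori infinitary axiom~\ref{item:BM} reduces to a collection of finitary witnesses on finite subsets of~$E$, so the total number of dense sets is at most ${|E|^{<\omega} + \aleph_0 < \mathfrak{c}}$, invoking \Cref{clm: card ar under Martin} to control cardinal arithmetic under Martin's Axiom. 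Martin's Axiom then supplies a filter~${F \subseteq P}$ meeting all these dense sets. Reading off the committed data from~$F$ produces a family ${\mathcal{B}(N) \subseteq \mathcal{I}(M)}$ satisfying~\ref{item:B1}, \ref{item:B2}, and~\ref{item:BM}, which therefore defines a matroid~$N$ on~$E$; consistency with~\ref{item:GT3} ensures $N$ is a generalised truncation of~$M$, and the dense sets excluding $n$-truncations ensure~$N$ is wild.

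The principal technical obstacle is to arrange the condition structure so that finitely-many-commitment approximations adequately control the globally-stated axioms~\ref{item:B2} and~\ref{item:BM} of the generic~$N$, including the subtle interaction between the local truncation commitment~\ref{item:GT3} and the maximality witnesses in~\ref{item:BM}; this reduction closely mirrors the Bowler--Geschke template and leans essentially on the finitary nature of~$M$ to certify that finite conditions encode sufficient matroidal information.
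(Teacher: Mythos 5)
Your proposal does not address the statement at hand. The statement to be proved is \Cref{clm: card ar under Martin}, a purely set-theoretic fact: under Martin's Axiom, ${2^{\kappa} = \mathfrak{c}}$ for every cardinal~$\kappa$ with~${\aleph_0 \leq \kappa < \mathfrak{c}}$. This is a classical result which the paper simply cites from Jech~\cite{jech2002set} without proof; it involves no matroids whatsoever. What you have written instead is a proof sketch of the paper's \emph{main} theorem (\Cref{thm: main Martin}, the existence of wild generalised truncations), and your sketch even explicitly \emph{invokes} \Cref{clm: card ar under Martin} as a tool ``to control cardinal arithmetic under Martin's Axiom'' --- so as an argument for that statement it would be circular, and as written it simply proves something else.

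For the record, the standard argument for the actual statement runs as follows. The inequality ${2^{\kappa} \geq 2^{\aleph_0} = \mathfrak{c}}$ is immediate from ${\kappa \geq \aleph_0}$. For the converse, fix an almost disjoint family ${\{ A_{\xi} \colon \xi < \kappa \}}$ of infinite subsets of~$\omega$. Given any ${S \subseteq \kappa}$, the almost disjoint coding forcing (a ${\mathsf{ccc}}$ partial order of finite conditions) together with Martin's Axiom applied to ${\kappa < \mathfrak{c}}$ many dense sets produces a single set ${B \subseteq \omega}$ such that ${A_{\xi} \cap B}$ is finite if and only if ${\xi \in S}$. Hence ${S \mapsto B}$ yields an injection of ${\mathcal{P}(\kappa)}$ into ${\mathcal{P}(\omega)}$, giving ${2^{\kappa} \leq 2^{\aleph_0} = \mathfrak{c}}$. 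None of the machinery of generalised truncations, the poset of finite approximations to a base family, or the finitary hypothesis on~$M$ is relevant here; you should supply an argument of the above kind (or simply cite \cite{jech2002set}, as the paper does).
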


\section{Preparations}

First, we give a characterisation for a set~$\mathcal{F}$ to be the set of bases of a generalised truncation of a matroid~$M$. 
\begin{lemma}
    \label{lem:wildcharacter}
    A set~$\mathcal{F}$ is the set of the bases of a generalised truncation of a matroid~$M$ if and only if it satisfies the following conditions:
    \begin{enumerate}
        [label=(\arabic*)]
        \item\label{item: indep} ${\emptyset \neq \mathcal{F} \subseteq \mathcal{I}(M)}$;
        \item\label{item: class} If~${B \in \mathcal{F}}$ and~${B' \in \mathcal{I}(M)}$ with~${\abs{B \setminus B'} = \abs{B' \setminus 
        B} < \aleph_0}$, then~${B' \in \mathcal{F}}$; 
        \item\label{item: exchange works} If~${B,B' \in \mathcal{F}}$, then no proper subset of~$B$ spans~$B'$ in~$M$;
        \item\label{item: mino axiom works} For every~${I, J \in \mathcal{I}(M)}$  with~${I \subseteq J}$, if there is a~${B \in \mathcal{F}}$ 
        with~${I \subseteq B}$, then there is a ${B' \in \mathcal{F}}$ such that either~${I \subseteq B' \subseteq J}$ or~${B' \supseteq J}$. 
    \end{enumerate}
\end{lemma}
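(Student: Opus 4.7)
The plan is to prove both implications separately, with condition (III) serving as the central tool for exchanges on the $N$-side and Lemma~\ref{lem: quasi equivcar} doing the cardinality bookkeeping.

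\textbf{Forward direction.} Assume $\mathcal{F} = \mathcal{B}(N)$ for a generalised truncation $N$ of $M$, and verify each condition. Condition~(1) is immediate from (B1) applied to $N$ together with (II). For~(2), I would induct on $k = |B \setminus B'|$: setting $J := B \cap B'$ and enumerating $B' \setminus B = \{y_1, \ldots, y_k\}$, I would apply (III) repeatedly to show that $J_j := J \cup \{y_1, \ldots, y_j\} \in \mathcal{I}(N)$ for every $j \leq k$. The key point is that if some intermediate $J_j$ with $j < k$ were a base of $N$, then Lemma~\ref{lem: quasi equivcar} applied to $B$ and $J_j$ would force $|J_j \setminus B| = |B \setminus J_j| = k$, contradicting $|J_j \setminus B| = j$; hence each $J_j$ stays non-base until the final step produces $B' \in \mathcal{I}(N)$, and the same cardinality argument rules out any proper base-extension of $B'$ inside $N$. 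For~(3), if a proper subset $B \setminus \{x\}$ spanned $B'$ in $M$, then $M$-independence of $B$ would place $x$ outside $B'$, so $x \in B \setminus B'$; (B2) in $N$ would then yield $y \in B' \setminus B$ with $(B \setminus \{x\}) \cup \{y\} \in \mathcal{B}(N) \subseteq \mathcal{I}(M)$, but $y \in \mathsf{span}_M(B \setminus \{x\})$ makes this set $M$-dependent, a contradiction. For~(4), I would apply (BM) in $N$ to $X := J$ to obtain a base $B_1$ with $J \cap B_1 \supseteq I$ maximal among $\{J \cap B'' : B'' \in \mathcal{B}(N)\}$; if $J \subseteq B_1$ we are done, otherwise for any $e \in J \setminus B_1$ maximality forces $(J \cap B_1) \cup \{e\} \notin \mathcal{I}(N)$ while $(J \cap B_1) \cup \{e\} \subseteq J \in \mathcal{I}(M)$, so (III) compels $J \cap B_1$ itself to be a base, and $B' := J \cap B_1$ works.

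\textbf{Backward direction.} I would define $\mathcal{I}(N) := \{ I : I \subseteq B \text{ for some } B \in \mathcal{F}\}$. Then $\mathcal{I}(N) \subseteq \mathcal{I}(M)$ from~(1), and every $B \in \mathcal{F}$ is maximal in $\mathcal{I}(N)$ by~(3): a proper containment $B \subsetneq B'$ in $\mathcal{F}$ would make $B' \setminus \{e\}$ a proper subset of $B'$ that spans (indeed contains) $B$ for any $e \in B' \setminus B$. This identifies $\mathcal{B}(N) = \mathcal{F}$. For (B1) use~(1); for (B2), given $B_0, B_1 \in \mathcal{F}$ and $x \in B_0 \setminus B_1$, the inclusion $B_0 \cap B_1 \subseteq B_0 \setminus \{x\}$ together with~(3) yields some $y \in B_1 \setminus \mathsf{span}_M(B_0 \setminus \{x\})$ lying automatically in $B_1 \setminus B_0$; then $(B_0 \setminus \{x\}) \cup \{y\} \in \mathcal{I}(M)$, and~(2) promotes it into $\mathcal{F}$. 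For (BM), given $X \subseteq E$ and $B_0 \in \mathcal{F}$, use (BM) in $M$ to extend $X \cap B_0$ to a maximal $M$-independent $J \subseteq X$, then apply~(4) to $X \cap B_0 \subseteq J$; the resulting $B' \in \mathcal{F}$ satisfies $X \cap B' \supseteq X \cap B_0$ and is maximal in $\{X \cap B'' : B'' \in \mathcal{F}\}$ — either $B' \subseteq J \subseteq X$ and $N$-maximality of $B'$ blocks any further extension, or $B' \supseteq J$ and then $B' \cap X = J$ by maximality of $J$ in $\mathcal{I}(M) \cap \mathcal{P}(X)$. Finally, (III) is a direct application of~(4) to $I \subseteq I \cup \{e\}$: since $I$ is non-base, the base $B'$ produced must be either $I \cup \{e\}$ itself or a superset, and in both cases $I \cup \{e\} \in \mathcal{I}(N)$.

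\textbf{Main obstacle.} The (BM) verification in the backward direction is the most delicate step, since it requires feeding (BM) for $M$ through condition~(4) and then handling two qualitatively different cases to certify maximality. The parallel forward-direction step for~(4), where (BM) in $N$ must be reconciled with (III) via a maximality argument, is of similar flavour and difficulty; and the cardinality bookkeeping in~(2) via Lemma~\ref{lem: quasi equivcar} is another place where care is needed, since in an infinite matroid different bases may have different cardinalities but must agree on finite symmetric differences.
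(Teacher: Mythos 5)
Your proposal is correct and follows essentially the same route as the paper's proof: both directions are checked condition by condition against \ref{item:B1}, \ref{item:B2}, \ref{item:BM} and \ref{item:GT1}--\ref{item:GT3}, with \Cref{lem: quasi equivcar} doing the cardinality bookkeeping and condition~\ref{item: mino axiom works} interacting with \ref{item:BM} and \ref{item:GT3} exactly as in the paper. The only local variations are that you verify condition~\ref{item: class} by an element-by-element induction through \ref{item:GT3} where the paper passes to a maximal $N$-independent subset of~$B'$ in one step, and you derive condition~\ref{item: exchange works} directly from \ref{item:B2} rather than from the observation that a set spanning a base of~$N$ in~$M$ also spans it in~$N$; both variants are sound.
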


\begin{proof}
    First, let us show that the set of bases~${\mathcal{F} \coloneqq \mathcal{B}(N)}$ of any generalised truncation~$N$ of a matroid~$M$ satisfies 
    these conditions. 
    Clearly, by \ref{item:B1} and \ref{item:GT2}, condition~\ref{item: indep} holds. 
    To show condition~\ref{item: class}, let~${B \in 
    \mathcal{F}}$ and~${B' \in \mathcal{I}(M)}$ with~${\abs{B \setminus B'} = \abs{B' \setminus B} < \aleph_0}$. 
    If~${B' \notin \mathcal{I}(N)}$, then by applying axiom \ref{item:BM} we get a maximal $N$-independent subset~$B''$ of~$B'$. 
    But then, \ref{item:GT3} ensures that~$B''$ is a base of~$N$, which contradicts \Cref{lem: quasi equivcar} since~${B'' \subsetneq B'}$. 
    Therefore, ${B' \in \mathcal{I}(N)}$. 
    If~${B' \notin \mathcal{F}}$, then a proper superset of~$B'$ is in~$\mathcal{F}$ which again contradicts \Cref{lem: quasi equivcar}. 
    Hence~${B' \in \mathcal{F}}$ and condition \ref{item: class} holds. 
    If there are~${I \subsetneq B \in \mathcal{F}}$ such that~$I$ spans a~${B' \in \mathcal{F}}$ in~$M$, then \ref{item:GT2} ensures that~$I$ spans~$B'$ in~$N$ as well. 
    Therefore~$I$ is a base of~$N$. 
    But then a proper subset of a base is a base which contradicts \Cref{lem: quasi equivcar}. 
    Therefore condition~\ref{item: exchange works} is satisfied. 
    For condition~\ref{item: mino axiom works}, let~${I,J \in \mathcal{I}(M)}$ with~${I \subseteq J}$ and suppose there is a~${B \in \mathcal{F}}$ with~${I \subseteq B}$. 
    Since~$N$ is a matroid, apply \ref{item:BM} with~${X \coloneqq J}$, so there is a maximal $N$-independent subset~${I' \subseteq J}$ that includes~$I$. 
    Suppose~${I' \notin \mathcal{F}}$. 
    Then by \ref{item:GT3}, there is no~${e \in J \setminus I'}$ as that would contradict the maximality of~$I'$, so~${I' = J}$ and hence~${J \in 
    \mathcal{I}(N)}$ and there is a base of~$N$ that includes~$J$, as required. Therefore \ref{item: mino axiom works} holds. 

    So let us show that these conditions are also sufficient. 
    We first prove that a set~$\mathcal{F}$ satisfying these condition is indeed the set of bases of some matroid. 
    Clearly, \ref{item:B1} is satisfied by condition \ref{item: indep}. 
    For \ref{item:B2}, let~${B,B' \in \mathcal{F}}$ and~${x \in B \setminus B'}$. 
    By \ref{item: exchange works} there is a~${y \in B'}$ such that~${y \notin \mathsf{span}_M(B \setminus \{x\})}$. 
    Hence, ${(B \setminus \{x\}) \cup \{y\} \in \mathcal{I}(M)}$ and, by condition~\ref{item: class}, ${(B \setminus \{x\}) \cup \{y\} \in 
    \mathcal{F}}$. 
    For \ref{item:BM}, let~${X \subseteq E}$, let~${B \in \mathcal{F}}$, and consider~${I \coloneqq X \cap B}$. 
    Since~$M$ is a matroid, let~${J}$  be a maximal  $M$-independent subset of~$X$ that includes~$I$. 
    By condition \ref{item: mino axiom works}, there is a~${B' \in \mathcal{F}}$ such that either~${I \subseteq B' \subseteq J}$ or~${B' \supseteq 
    J}$. 
    In both cases, ${X \cap B' \supseteq X \cap B}$, and~${X \cap B'}$ is a maximal element of~${\{ X \cap B'' \colon B'' \in 
    \mathcal{F} \}}$, proving~\ref{item:BM}. 
    Therefore,~$\mathcal{F}$ is the set of the bases of a matroid~$N$ on~$E$. 
    
    Hence, all that is left to show is that this~$N$ is indeed a generalised truncation of~$M$. 
    Clearly, \ref{item:GT1} holds and \ref{item:GT2} is true by \ref{item: indep}. 
    For \ref{item:GT3}, consider~${I \in \mathcal{I}(N) \setminus \mathcal{B}(N)}$ and~${e \in E \setminus I}$ such that~${I \cup \{e\} \in 
    \mathcal{I}(M)}$.
    By condition \ref{item: mino axiom works}, there is a ${B' \in \mathcal{F}}$ such that either~${I \subseteq B' \subseteq I \cup \{e\}}$ or~${B' \supseteq I\cup \{ e \}}$. 
    If the former case holds, then as~${I \notin \mathcal{F}}$, we have~${B' = I \cup \{e\}}$, so in both cases, $B'$ witnesses that~${I \cup \{e\} \in \mathcal{I}(N)}$, as required. 
\end{proof}

For the remainder of this section, we fix a matroid~${M}$ of infinite rank and set~${E \coloneqq E(M)}$ and~${\mathcal{I} \coloneqq \mathcal{I}(M)}$. 
For~${I, J \in \mathcal{I}}$, we say that~$J$ \emph{almost spans}~$I$, and write~${I  \trianglelefteq J}$, if~${r_{M}(I | J) < \infty}$. 
This defines a pre-order~$\trianglelefteq$ on~$\mathcal{I}$. 
For~${I, J \in \mathcal{I}}$, we say that~$I$ and~$J$ are \emph{weakly equivalent} if~${I \trianglelefteq J}$ and~${J \trianglelefteq I}$. 
Moreover, we say that~$I$ and~$J$ are \emph{strongly equivalent}, and write~${I \sim J}$, if~${r_{M}(I | J) = r_{M}(J | I) < \infty}$.
While clearly the relation of strong equivalence is reflexive and symmetric, to observe the transitivity we make use of the 
following lemma. 

\begin{lemma}
    \label{lem:strongcharacter}
    If~${I \sim J}$, then~${r_{M}(X|I) = r_{M}(X|J)}$ holds whenever~${I \cup 
    J \subseteq X \subseteq E}$. 
    Moreover, if there exists an~$X$ with~${I \cup J \subseteq X \subseteq E}$ such that~${r_{M}(X|I) = r_{M}(X|J) < \infty}$, then~${I \sim J}$. 
\end{lemma}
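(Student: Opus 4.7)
The plan is to apply the relative rank identity \ref{axiom:r3} to the two nested chains~${I \subseteq I \cup J \subseteq X}$ and~${J \subseteq I \cup J \subseteq X}$. Since relative rank depends only on the symmetric-difference part (as~${r_M(Y|Z) = r_{M/Z}(Y \setminus Z)}$), this should yield the pair of decompositions
\[
    r_M(X|I) = r_M(J \mid I) + r_M(X \mid I \cup J) \quad\text{and}\quad r_M(X|J) = r_M(I \mid J) + r_M(X \mid I \cup J),
\]
which share the common summand~${r_M(X \mid I \cup J)}$. All of the content of the lemma will come from comparing these two identities.

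For the first assertion, I would assume~${I \sim J}$, so that~${r_M(J|I) = r_M(I|J) < \infty}$ by definition. If the shared summand~${r_M(X \mid I \cup J)}$ is infinite, then both sides of the conclusion~${r_M(X|I) = r_M(X|J)}$ are already~$\infty$ and there is nothing to check; otherwise all four terms are finite and I may cancel the common summand to obtain the equality.

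For the converse, I would start from the hypothesised finite equality~${r_M(X|I) = r_M(X|J) < \infty}$. A further use of \ref{axiom:r3} on~${I \subseteq I \cup J \subseteq X}$ shows that~${r_M(X \mid I \cup J) \leq r_M(X|I) < \infty}$, so the shared summand is finite; in turn, the two decompositions force both~${r_M(J|I)}$ and~${r_M(I|J)}$ to be finite, and then cancelling the common summand from the given equality yields~${r_M(J|I) = r_M(I|J) < \infty}$, which is exactly~${I \sim J}$.

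The whole argument is essentially arithmetic with \ref{axiom:r3}, so the only point requiring any care is not cancelling an infinite quantity; I would handle the~${r_M(X \mid I \cup J) = \infty}$ case separately in the forward direction, as above. No deeper obstacle is expected, and as a byproduct this lemma makes the transitivity of~$\sim$ immediate: given~${I \sim J}$ and~${J \sim K}$, applying the first part with~${X \coloneqq I \cup J \cup K}$ produces a finite~$X$ to which the moreover part can be applied to~${I,K}$.
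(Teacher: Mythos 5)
Your proposal is correct and follows essentially the same route as the paper: decompose $r_M(X|I)$ and $r_M(X|J)$ via \ref{axiom:r3} along the chains ${I \subseteq I \cup J \subseteq X}$ and ${J \subseteq I \cup J \subseteq X}$, identify ${r_M(I\cup J|I)=r_M(J|I)}$ and ${r_M(I\cup J|J)=r_M(I|J)}$, and cancel the common summand ${r_M(X|I\cup J)}$, which in the converse direction is finite because it is bounded by ${r_M(X|I)}$. The only difference is presentational: you treat the case ${r_M(X|I\cup J)=\infty}$ separately in the forward direction, whereas the paper handles it implicitly through a chain of equalities; both are fine.
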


\begin{proof}
    Assume that~${I \sim J}$. 
    Using \ref{axiom:r3}, we get the following.   
    \begin{align*}
    r_{M}(X|I) 
        &= r_{M}(I\cup J| I)+r_{M}(X | I\cup J) 
        = r_{M}(J|I)+r_{M}(X | I\cup J)\\
        &= r_{M}(I|J)+r_{M}(X | I\cup J) 
        = r_{M}(I\cup J| I)+r_{M}(X | I\cup J)
        = r_{M}(X| J).
    \end{align*}

    Now suppose that~${r_{M}(X| I) = r_{M}(X| J) < \infty}$ where ${I \cup J \subseteq X \subseteq E}$. 
    Then, using \ref{axiom:r3}, 
    \[
        r_{M}(J|I) + r_{M}(X |I\cup J) 
        = r_{M}(X| I) 
        = r_{M}(X| J) 
        = r_{M}(I|J) + r_{M}(X | I \cup J).
    \]
    \noindent 
    Since~${r_{M}(X| I) < \infty}$, all these quantities are finite. 
    Hence by subtracting~${r_{M}(X | I \cup J)}$ from both sides we obtain~${r_{M}(I|J) = r_{M}(J|I) < \infty}$, 
    and therefore, ${I \sim J}$.
\end{proof}

\begin{corollary}
    Strong equivalence is an equivalence relation. 
\end{corollary}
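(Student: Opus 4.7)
The plan is to verify the three properties of an equivalence relation---reflexivity, symmetry, and transitivity---where reflexivity and symmetry are essentially immediate from the definition, so that the real work is in transitivity. Reflexivity follows since $r_M(I | I) = 0 < \infty$, and symmetry is built into the symmetric condition $r_M(I | J) = r_M(J | I) < \infty$.

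For transitivity, suppose $I \sim J$ and $J \sim K$. I would set $X \coloneqq I \cup J \cup K$ and aim to apply both directions of \Cref{lem:strongcharacter}. By the first part of that lemma, $r_M(X | I) = r_M(X | J)$ (since $I \cup J \subseteq X$ and $I \sim J$) and $r_M(X | J) = r_M(X | K)$ (since $J \cup K \subseteq X$ and $J \sim K$). Thus $r_M(X | I) = r_M(X | K)$.

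The remaining task is to check that these relative ranks are finite, after which the second half of \Cref{lem:strongcharacter} (applied with $X$, which contains $I \cup K$) delivers $I \sim K$. For finiteness, I would use \ref{axiom:r3} to decompose
\[
    r_M(X | I) = r_M(I \cup J | I) + r_M(X | I \cup J) = r_M(J | I) + r_M(K | I \cup J).
\]
The first summand is finite because $I \sim J$. For the second, since contracting more elements can only decrease relative rank, $r_M(K | I \cup J) \leq r_M(K | J)$, which is finite because $J \sim K$. Hence $r_M(X | I) < \infty$, and the second statement of \Cref{lem:strongcharacter} yields $I \sim K$, completing transitivity.

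No real obstacle is anticipated here; the lemma is designed precisely so that strong equivalence behaves well, and the only subtlety is remembering to verify finiteness before invoking the converse direction.
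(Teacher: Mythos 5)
Your proposal is correct and follows essentially the same route as the paper: both set ${X = I \cup J \cup K}$, use \ref{axiom:r3} together with the monotonicity bound ${r_M(K \mid I \cup J) \leq r_M(K \mid J)}$ to get ${r_M(X \mid I) < \infty}$, and then apply the two directions of \Cref{lem:strongcharacter} to conclude ${I \sim K}$. No issues.
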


\begin{proof}
    Reflexivity and symmetry are straightforward. 
    To prove the transitivity, assume that~${I \sim J}$ and~${J \sim K}$. Set~${X \coloneqq I \cup J \cup K}$. 
    Then~${r_{M}(X| I) < \infty}$ since (by using \ref{axiom:r3}) 
    \[
        r_{M}(X| I)
        = r_{M}(J | I) + r_{M}(X| I\cup J) 
        = r_{M}(J | I) + r_{M}(K| I\cup J) 
        \leq r_{M}(J| I) + r_{M}(K| J),
    \]
    where both summands on the right side are 
    finite by the assumptions~${I \sim J}$ and~${J \sim K}$. 
    Since~${I \sim J}$, \Cref{lem:strongcharacter} provides~${r_{M}(X| I) = r_{M}(X| J)}$. 
    Similarly, ${r_{M}(X| J) = r_{M}(X| K)}$. But then, ${r_{M}(X| I) = r_{M}(X| K) < \infty}$. 
    Again by \Cref{lem:strongcharacter}, we conclude that~${I \sim K}$.
\end{proof}

We observe the following characterisation of strong equivalence with finite difference. 

\begin{lemma}
    \label{lem: almost same}
    If~${I,J \in \mathcal{I}}$ with~${\abs{I \setminus J} < \aleph_0}$, then~${I \sim J}$ if and only if~${\abs{I \setminus J} = \abs{J \setminus I}}$. 
\end{lemma}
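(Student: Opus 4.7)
The plan is to derive one key identity relating the relative ranks and the cardinalities $n \coloneqq |I \setminus J|$ (finite by assumption) and $m \coloneqq |J \setminus I|$ (a priori possibly infinite), from which both directions will follow by arithmetic in $\mathbb{N} \cup \{\infty\}$.

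First I would use that $I$ is $M$-independent together with $I \cap J \subseteq I$ to conclude that $I \setminus J = I \setminus (I \cap J)$ is independent in the contraction $M/(I \cap J)$; hence $r_M(I | I \cap J) = n$. Symmetrically, $r_M(J | I \cap J) = m$, where this is read as $\infty$ when $J \setminus I$ is infinite. Applying axiom~\ref{axiom:r3} to the chains $I \cap J \subseteq I \subseteq I \cup J$ and $I \cap J \subseteq J \subseteq I \cup J$, and noting that $r_M(I \cup J | I) = r_M(J | I)$ and $r_M(I \cup J | J) = r_M(I | J)$, one obtains
\[
    n + r_M(J | I) \;=\; r_M(I \cup J | I \cap J) \;=\; m + r_M(I | J).
\]

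For the forward direction, $I \sim J$ means $r_M(I | J) = r_M(J | I) < \infty$. The identity then forces $m < \infty$ (otherwise the right-hand side is infinite while the left-hand side is finite), and cancelling the common finite rank term yields $n = m$. For the converse, assume $n = m < \infty$; then $r_M(I | J) \le n$ and $r_M(J | I) \le m$ are both finite, and cancelling $n = m$ in the identity gives $r_M(I | J) = r_M(J | I) < \infty$, i.e.~$I \sim J$.

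I do not anticipate a serious obstacle here. The one point that warrants some care is allowing $m = \infty$ a priori in the forward direction, so that the identity must be interpreted in $\mathbb{N} \cup \{\infty\}$; once that convention is fixed, the argument reduces to the same two applications of~\ref{axiom:r3} already used in the proof of transitivity of $\sim$ above.
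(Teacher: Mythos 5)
Your proof is correct. The forward computation is essentially the same as the paper's: both apply \ref{axiom:r3} along the chains ${I \cap J \subseteq I \subseteq I \cup J}$ and ${I \cap J \subseteq J \subseteq I \cup J}$, using that ${r_M(I \mid I \cap J) = \abs{I \setminus J}}$ and ${r_M(J \mid I \cap J) = \abs{J \setminus I}}$. The difference is in how the two directions are then extracted. The paper, for the ``if'' direction, deduces ${r_M(I \cup J \mid I) = r_M(I \cup J \mid J) < \infty}$ and invokes the second part of \Cref{lem:strongcharacter} to conclude ${I \sim J}$; you instead observe directly that ${r_M(I \cup J \mid I) = r_M(J \mid I)}$ and ${r_M(I \cup J \mid J) = r_M(I \mid J)}$, so the definition of~$\sim$ drops out of the single identity ${\abs{I \setminus J} + r_M(J \mid I) = \abs{J \setminus I} + r_M(I \mid J)}$ without the auxiliary lemma. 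For the ``only if'' direction the routes genuinely diverge: the paper argues by contrapositive, shrinking $I$ or $J$ to a proper subset with equal differences, applying the first part, and using that no set is strongly equivalent to a proper subset of itself (together with transitivity of~$\sim$); you read ${\abs{I \setminus J} = \abs{J \setminus I}}$ off the same identity by arithmetic in ${\mathbb{N} \cup \{\infty\}}$, first noting that ${I \sim J}$ forces $\abs{J \setminus I}$ to be finite. Your version is more self-contained and handles both directions uniformly from one equation, at the small cost of having to fix the convention that \ref{axiom:r3} is read with values in ${\mathbb{N} \cup \{\infty\}}$ (which is indeed how relative rank is set up in the paper), whereas the paper's version reuses machinery (\Cref{lem:strongcharacter}) that it needs elsewhere anyway.
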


\begin{proof}
    Suppose first that~${\abs{I \setminus J} = \abs{J \setminus I} < \infty}$ and let~${X \coloneqq I \cup J}$. 
    Then \linebreak ${r_{M}(X|I) \leq \abs{J \setminus I} < \infty}$, and, furthermore, ${r_{M}(X|I \cap J) \leq \abs{J \setminus I} + \abs{I \setminus J} < \infty}$. 
    By \ref{axiom:r3} and since~${r_M(I|I \cap J) = \abs{I \setminus J}}$, we get ${r_{M}(X|I \cap J) = \abs{I \setminus J} + r_{M}(X|I)}$. 
    Similarly, ${r_{M}(X|I\cap J) = \abs{J \setminus I} + r_{M}(X|J)}$. 
    Hence
    \[ 
        r_{M}(X|I) 
        = r_{M}(X|I\cap J)-\abs{I \setminus J} 
        = r_{M}(X|I\cap J)-\abs{J \setminus I} 
        = r_{M}(X|J). 
    \]
    Thus~${I \sim J}$ follows by applying \Cref{lem:strongcharacter}.
    
    If on the other hand~${\abs{I \setminus J} < \aleph_0}$ and~${\abs{I \setminus J} \neq \abs{J \setminus I}}$, then there is either an~${I' \subsetneq I}$  with ${\abs{I' \setminus J} = \abs{J \setminus I'} < \aleph_0}$, or a ${J' \subsetneq  J}$ with ${\abs{I \setminus J'} = \abs{J' \setminus I} < \aleph_0}$. 
    Then we already know by the first part of the lemma that~${I' \sim J}$ and~${I \sim J'}$ in the respective cases. 
    Since no set is equivalent to a proper subset of itself, ${I \not\sim J}$ follows in both cases. 
\end{proof}

For ${I \in \mathcal{I}}$, let~${[I]}$ denote the equivalence class of~$I$ with respect to~$\sim$. 
Strong equivalence is clearly a refinement of the weak equivalence, thus it is compatible with almost spanning in the following sense. 

\begin{observation}
    \label{obs:compatible}
    Let~${I,I',J,J' \in \mathcal{I}}$. 
    If~${I \sim I'}$, ${J \sim J'}$, and~${I  \trianglelefteq J}$, then~${I'  \trianglelefteq J'}$. 
\end{observation}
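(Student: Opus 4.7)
The plan is to reduce the claim to a statement about weak equivalence and then apply axiom~\ref{axiom:r3} twice in succession.

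First I would observe that strong equivalence~$\sim$ refines weak equivalence: if $I \sim I'$ then both $r_M(I|I')$ and $r_M(I'|I)$ are finite, and analogously for $J$ and $J'$. Hence it suffices to show that $\trianglelefteq$ is preserved when either argument is replaced by a weakly equivalent independent set, and I would handle the two sides one at a time via a bridging argument.

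In the first step I would prove $I' \trianglelefteq J$ by applying~\ref{axiom:r3} to the chain $J \subseteq I \cup J \subseteq I \cup I' \cup J$ to decompose
\[
    r_M(I \cup I'|J) = r_M(I|J) + r_M(I \cup I'|I \cup J).
\]
The first summand is finite by the hypothesis $I \trianglelefteq J$, and the second equals $r_M(I'|I \cup J)$, which is bounded by $r_M(I'|I)$ via monotonicity of relative rank in its conditioning argument (further contraction can only decrease rank). The latter is finite by $I \sim I'$, so $r_M(I'|J) \leq r_M(I \cup I'|J) < \infty$.

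The second step is entirely symmetric: I would apply~\ref{axiom:r3} to $J' \subseteq J \cup J' \subseteq I' \cup J \cup J'$ to decompose
\[
    r_M(I' \cup J|J') = r_M(J|J') + r_M(I' \cup J|J \cup J').
\]
The first summand is finite by $J \sim J'$, and the second is bounded by $r_M(I'|J)$, which is finite by the first step. Therefore $r_M(I'|J') \leq r_M(I' \cup J|J') < \infty$, that is, $I' \trianglelefteq J'$, as desired. The argument has no real obstacle; the only small subtlety is noting that relative rank is monotone decreasing in its conditioning argument, which follows directly from the definition and the fact that additional contraction cannot raise rank.
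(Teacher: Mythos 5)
Your argument is correct and amounts to exactly the justification the paper leaves implicit: since $\sim$ refines weak equivalence, the observation reduces to transitivity of the pre-order $\trianglelefteq$ (giving ${I' \trianglelefteq I \trianglelefteq J \trianglelefteq J'}$), and your two applications of \ref{axiom:r3} together with monotonicity of relative rank in the conditioning set are precisely a proof of that transitivity, carried out once on each side. There is no gap; the paper simply states the observation without proof on these grounds.
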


Hence, the pre-order of almost spanning extends to a pre-order on the set of equivalence classes of~$\sim$. 
We abuse the notation by denoting this by~$ \trianglelefteq$ as well. 

Note that for a finite~${I \in \mathcal{I}}$ and every~${J \in \mathcal{I}}$, we have that~$J$ almost spans~$I$, and so~${[I] \trianglelefteq [J]}$. 
Similarly, if~${I \in \mathcal{I}}$ with~${r(M/I) < \infty}$, then~${[J]  \trianglelefteq [I]}$ for every~${J \in \mathcal{I}}$. 

\begin{lemma}
    \label{lem: finite and almost base}
    For every finite~${I \in \mathcal{I}}$, the set~${[I] = \{ J \in \mathcal{I} \colon\ \abs{J} = \abs{I} \}}$ consists of the bases of the $\abs{I}$-truncation of~$M$. 
    
    For every~${I \in \mathcal{I}}$ with~${r(M/I) \in \mathbb{N} \setminus \{0\}}$, the set~${[I] = \{ J \in \mathcal{I} \colon\ r(M/J) = r(M/I) \}}$ consists of the bases of the $(- r(M/I))$-truncation of~$M$. 
\end{lemma}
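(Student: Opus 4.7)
For the first claim, let $I \in \mathcal{I}$ be finite. The plan is to invoke \Cref{lem: almost same} directly: for any $J \in \mathcal{I}$, one has $\abs{I \setminus J} \leq \abs{I} < \aleph_0$, so the lemma's hypothesis is always satisfied and it characterises $J \sim I$ as $\abs{I \setminus J} = \abs{J \setminus I}$. If $J$ is infinite, then $\abs{J \setminus I} \geq \aleph_0 > \abs{I \setminus J}$, so $J \not\sim I$; if $J$ is finite, then $\abs{I \setminus J} = \abs{J \setminus I}$ is equivalent to $\abs{I} = \abs{J}$. This yields $[I] = \{J \in \mathcal{I} \colon \abs{J} = \abs{I}\}$, which by the definition of truncation is precisely the set of bases of the $\abs{I}$-truncation of $M$.

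For the second claim, set $n \coloneqq r(M/I) \in \mathbb{N} \setminus \{0\}$. I would apply \Cref{lem:strongcharacter} with $X \coloneqq E$, noting that $r_M(E|I) = r(M/I) = n$ and $r_M(E|J) = r(M/J)$ for every $J \in \mathcal{I}$. If $J \sim I$, the first half of the lemma gives $r(M/J) = r_M(E|J) = r_M(E|I) = n$. Conversely, if $r(M/J) = n$, then $r_M(E|I) = r_M(E|J) = n < \infty$, and the second half of the lemma yields $J \sim I$. Hence $[I] = \{J \in \mathcal{I} \colon r(M/J) = r(M/I)\}$.

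To finish, I would identify this set with the bases of the $(-n)$-truncation of $M$. A short induction on $n$, using that truncation preserves infinite rank (so the iteration is well-defined for all $n$ since $M$ has infinite rank), shows that the bases of the $(-n)$-truncation are exactly the sets of the form $B \setminus F$ where $B \in \mathcal{B}(M)$ and $F \subseteq B$ with $\abs{F} = n$. The remaining equivalence is that $J \in \mathcal{I}$ satisfies $r(M/J) = n$ if and only if $J = B \setminus F$ for such a pair: if $r(M/J) = n$, picking any base $F$ of $M/J$ gives $J \cup F \in \mathcal{B}(M)$ with $\abs{F} = n$; conversely, if $J = B \setminus F$ then $F$ is a base of $M/J$, so $r(M/J) = n$. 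I expect this identification to be the step most deserving of care, as it requires carefully tracking bases through the iterated truncation; the rest of the argument is a direct application of already established lemmas.
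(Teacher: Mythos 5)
Your proof is correct and follows the same backbone as the paper's: the first claim is read off from \Cref{lem: almost same}, and the inclusion ${\{ J \in \mathcal{I} \colon r(M/J) = r(M/I) \} \subseteq [I]}$ comes from the second part of \Cref{lem:strongcharacter} applied with ${X = E}$, exactly as in the paper. You diverge only in the converse direction of the second claim: the paper shows that ${r(M/J) \neq r(M/I)}$ forces ${J \not\sim I}$ by passing to a proper superset (an ${I' \supsetneq I}$ or ${J' \supsetneq J}$ with matching relative rank) and using that no independent set is strongly equivalent to a proper subset of itself, whereas you obtain the implication ${J \sim I \Rightarrow r(M/J) = r(M/I)}$ directly from the first half of \Cref{lem:strongcharacter} with ${X = E}$; your route is slightly more direct and equally valid, since that half of the lemma carries no finiteness hypothesis and so also covers ${r(M/J) = \infty}$. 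Finally, you spell out the identification of ${\{ J \in \mathcal{I} \colon r(M/J) = n \}}$ with the bases of the iterated ${(-n)}$-truncation, via induction on $n$ and the fact that ${F}$ is a base of ${M/J}$ if and only if ${J \cup F \in \mathcal{B}(M)}$ for ${J \in \mathcal{I}}$; the paper treats this identification as immediate from the definition of the ${(-n)}$-truncation, so your additional care there is harmless and, if anything, makes the argument more self-contained.
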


\begin{proof}
    Let~${n < \omega}$. 
    It follows directly from \Cref{lem: almost same} that~${\{ I \in \mathcal{I} \colon\ \abs{I} = n \}}$ is indeed an equivalence class of~$\sim$. 
    Furthermore, it is the~$\abs{I}$-truncation of~$M$ by definition. 
    
    By applying the second part of \Cref{lem:strongcharacter} with~${X = E}$, we conclude that the sets in ${\{ I \in \mathcal{I} \colon\ r(M/I) = n \}}$ are strongly equivalent. 
    Let~${J \in \mathcal{I}}$ with~${r(M/J) \neq n}$. 
    Then there is either an~${I' \supsetneq I}$ with~${r(M/I') = r(M/J) < \infty}$ or a~${J' \supsetneq J}$ with ${r(M/I) = r(M/J') < \infty}$ depending on if~${r(M/J) < r(M/I)}$ or~${r(M/J) > r(M/I)}$. 
    As before, ${I' \sim J}$ and~${I \sim J'}$ holds in the respective cases. 
    Since no set is strongly equivalent to a proper subset of itself, ${I \not\sim J}$ holds in both cases. 
\end{proof}

Finally, we will make use of the following technical lemma. 

\begin{lemma}
    \label{lem: ugly}
    If~${I,J \in \mathcal{I}}$ are infinite sets, then for every~${n < \omega}$ and finite~${J' \subseteq J}$, 
    there is a finite~${J'' \subseteq J \setminus J'}$ such that~${r_M(I|J \setminus J'') \geq n}$. 
\end{lemma}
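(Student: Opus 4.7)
My plan is to proceed by induction on $n$. The base case $n = 0$ is immediate with ${J'' = \emptyset}$. For the inductive step, I would apply the hypothesis to find a finite ${J''_0 \subseteq J \setminus J'}$ with ${r_M(I | J \setminus J''_0) \geq n}$; writing ${Y \coloneqq J \setminus J''_0}$ and ${K \coloneqq Y \setminus J' = J \setminus (J' \cup J''_0)}$, which is still infinite, I may assume ${r_M(I | Y) = n}$ (else we are already done). It then suffices to find a single element ${y \in K}$ such that ${r_M(I | Y \setminus \{y\}) \geq n + 1}$, since then ${J'' \coloneqq J''_0 \cup \{y\}}$ satisfies the required conclusion.

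A short computation via \ref{axiom:r3} -- applied first to ${Y \setminus \{y\} \subseteq Y \subseteq I \cup Y}$ and then to ${Y \setminus \{y\} \subseteq I \cup (Y \setminus \{y\}) \subseteq I \cup Y}$ -- shows that ${r_M(I | Y \setminus \{y\}) = n + 1 - r_M(\{y\} | I \cup (Y \setminus \{y\}))}$; so I need to produce a ${y \in K}$ with ${y \in \mathsf{span}_M(I \cup (Y \setminus \{y\}))}$. If ${I \cap K \neq \emptyset}$, any ${y \in I \cap K}$ trivially works. Otherwise I argue by contradiction, assuming ${I \cap K = \emptyset}$ and that no such ${y}$ exists.

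The assumption says exactly that every ${y \in K}$ is a coloop of ${N \coloneqq M{\upharpoonright}(I \cup Y)}$ and hence lies in every base of $N$. Using \ref{item:BM} inside $M/Y$, I fix a maximal $M/Y$-independent subset ${I_0 \subseteq I}$; it is automatically disjoint from $Y$ and has size $n$ by \Cref{lem: quasi equivcar}, and ${I_0 \cup Y}$ is a base of $N$. For each ${i \in I \setminus (I_0 \cup Y)}$, consider its fundamental circuit ${C_i \subseteq I_0 \cup Y \cup \{i\}}$: if some ${e \in C_i \cap K}$ existed, then the usual fundamental-circuit exchange would give the base ${(I_0 \cup Y \cup \{i\}) \setminus \{e\}}$ of $N$ missing the coloop $e$, a contradiction. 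Hence ${C_i \setminus \{i\} \subseteq I_0 \cup J'}$, which gives ${i \in \mathsf{span}_M(I_0 \cup J')}$. Since in addition ${I \cap Y \subseteq J'}$ (from ${I \cap K = \emptyset}$ together with ${Y = J' \cup K}$), the whole of ${I \setminus I_0}$ lies inside ${\mathsf{span}_M(I_0 \cup J')}$, a set of finite rank at most ${n + \abs{J'}}$. This contradicts the fact that ${I \setminus I_0}$ is an infinite independent set of $M$.

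The main subtlety I anticipate is that since $M$ is not assumed to be finitary, the fundamental circuits $C_i$ may themselves be infinite, so any argument relying on producing a finite witness inside $C_i$ would break down. The plan above sidesteps this entirely: only the set-theoretic inclusion ${C_i \cap K = \emptyset}$ is used, together with the existence of fundamental circuits and the basis exchange along them -- both of which are available for all matroids in the Bruhn et al.\ sense regardless of finitariness.
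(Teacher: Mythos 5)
Your proof is correct, and it takes a genuinely different route from the one in the paper. The paper argues directly, splitting into two cases according to whether ${I \cap J}$ is infinite: if it is, any $n$ elements of ${I \cap J}$ avoiding~$J'$ already serve as~$J''$; if not, then ${I \setminus J}$ is infinite and, for ${J' = \emptyset}$, one exchanges $n$ elements ${e_0, \dots, e_{n-1} \in I \setminus J}$ into~$J$ against suitable ${f_0, \dots, f_{n-1} \in J \setminus I}$ and takes ${J'' = \{ f_i \colon i < n \}}$, while a general~$J'$ is absorbed by passing to ${M/J'}$, replacing~$J$ by ${J \setminus J'}$ and~$I$ by a maximal ${M/J'}$-independent subset of ${I \setminus J'}$. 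You instead run an induction on~$n$ and reduce the step to producing a single element~$y$ of your set~$K$ that is spanned by ${I \cup (Y \setminus \{y\})}$, ruling out failure by the coloop/fundamental-circuit argument which would otherwise place the infinite independent set~$I$ inside ${\mathsf{span}_M(I_0 \cup J')}$, a set of finite rank; the rank bookkeeping via \ref{axiom:r3} and the use of \Cref{lem: quasi equivcar} for ${\abs{I_0} = n}$ are all in order. Both arguments work for arbitrary matroids in the sense of Bruhn et al.\ with no finitariness assumption, and your caution about infinite fundamental circuits is well placed: only existence and uniqueness of the circuit in the base plus one element, and the corresponding base exchange, are needed, and these hold in general. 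The paper's construction is shorter and more hands-on (an explicit finite exchange plus a contraction trick), whereas yours buys a cleaner one-element-at-a-time reduction at the cost of slightly heavier machinery; in fact your circuit step can be compressed further, since a coloop lies in no circuit at all, so the detour through the exchanged base ${(I_0 \cup Y \cup \{i\}) \setminus \{e\}}$ is not needed.
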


\begin{proof}
    If~${I \cap J}$ is infinite, then any $n$-element subset of~${I \cap J}$ which is disjoint from~${J'}$ is suitable as~${J''}$. 
    
    Assume that~${I \cap J}$ is finite. 
    Then~${I \setminus J}$ is infinite. 
    Suppose first that~${J' = \emptyset}$. 
    Let ${e_0,\dots, e_{n-1} \in I \setminus J}$ be pairwise distinct. 
    It is easy to see, that there are~${f_0, \dots, f_{n-1} \in J \setminus I}$ such that~${(J \setminus \{ f_i \colon\ i < n \}) \cup \{ e_i \colon\ i < n \} \in \mathcal{I}}$. 
    But then, ${\{ e_i \colon\ i < n \}}$ witnesses that ${J'' \coloneqq \{f_i \colon\ i < n \}}$ is suitable. 
    Finally, if~${J' \neq \emptyset}$, then we replace~$J'$ by~$\emptyset$, $M$ by~${M/J'}$, $J$ by~${J \setminus J'}$, and~$I$ by a maximal ${M/J'}$-independent subset of~${I \setminus J'}$. 
    Clearly, the premise of the lemma still holds and the resulting~${J''}$ is suitable for the original setting as well. 
\end{proof}

\section{Main result}

We actually prove the following slight strengthening of \Cref{thm: main Martin}. 

\begin{theorem}
    \label{thm: main Martin extra}
    Let~$M$ be a finitary matroid of infinite rank on a ground set~$E$ with~${\abs{E} < \mathfrak{c}}$, 
    and let~$\mathcal{F}_0$ be the union of less than~$\mathfrak{c}$ many pairwise $\trianglelefteq$-incomparable equivalence classes of~$\sim$. 
    If Martin's Axiom holds, then there exists a generalised truncation~$N$ of~$M$ such that~${\mathcal{B}(N) \supseteq \mathcal{F}_0}$. 
\end{theorem}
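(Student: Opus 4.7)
The plan is to apply \Cref{lem:wildcharacter}: I will construct $\mathcal{F} \supseteq \mathcal{F}_0$ satisfying conditions \ref{item: indep}--\ref{item: mino axiom works}, and the corresponding matroid $N$ will be the sought generalised truncation. First, I would verify that $\mathcal{F}_0$ already satisfies \ref{item: indep}, \ref{item: class}, and \ref{item: exchange works}. Condition \ref{item: indep} is immediate; \ref{item: class} follows from \Cref{lem: almost same}, since each $\sim$-class is closed under finite equal-size symmetric difference. For \ref{item: exchange works}, I would take $B, B' \in \mathcal{F}_0$ and a proper subset $B_0 \subsetneq B$ supposedly spanning $B'$: if $[B] \neq [B']$, then $r_M(B'|B_0) = 0 < \infty$ gives $[B'] \trianglelefteq [B]$, contradicting the pairwise $\trianglelefteq$-incomparability. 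If instead $B \sim B'$, combining \Cref{lem:strongcharacter} with \ref{axiom:r3} on $X = B \cup B'$ yields $r_M(X|B_0) = r_M(B|B_0) + r_M(B'|B) \geq 1 + r_M(B'|B)$ on the one hand and $r_M(X|B_0) = r_M(X|B' \cup B_0) \leq r_M(B|B')$ on the other, and the equality $r_M(B|B') = r_M(B'|B)$ from $\sim$ forces $1 \leq 0$, a contradiction.

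The bulk of the work is to secure \ref{item: mino axiom works} while preserving the others, via a Martin's Axiom argument. I would let $P$ be the poset of all finite $p \subseteq \mathcal{I}(M)$ such that $\mathcal{F}_0 \cup \overline{p}$ still satisfies \ref{item: exchange works}, where $\overline{p}$ denotes the union of the $\approx$-equivalence classes (finite equal-size symmetric difference) of the elements of $p$, ordered by reverse inclusion. A sufficiently generic filter $G$ on $P$ then produces $\mathcal{F} \coloneqq \mathcal{F}_0 \cup \bigcup_{p \in G} \overline{p}$ as the desired family; membership in $P$ by definition preserves \ref{item: indep}--\ref{item: exchange works} along the filter.

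To check that $P$ satisfies $\mathsf{ccc}$, I would run a $\Delta$-system argument on an uncountable $A \subseteq P$: refine by pigeonhole on $|p|$, apply the $\Delta$-system lemma to the finite ``cores'' of the bases in each condition (capturing each base modulo $\approx$), and use \Cref{lem: ugly} to show that two refined conditions with pairwise disjoint non-root portions admit a common extension in $P$. For \ref{item: mino axiom works}, I would introduce, for each relevant triple $(I, J, B_0)$ with $I \subseteq J$ both $M$-independent and $B_0$ an already-appearing base, a dense set $D_{I, J, B_0}$ that drops a witness base $B'$ into some later $\overline{q}$ with $I \subseteq B' \subseteq J$ or $B' \supseteq J$; density is achieved by choosing $B'$ inside a $\sim$-class $\trianglelefteq$-incomparable to every class present so far, which is available by a cardinality count using \Cref{clm: card ar under Martin}, since only fewer than $\mathfrak{c}$ classes have been used while $M$ has continuum-many candidate classes.

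The expected principal obstacle is to organise the family of dense sets so that its cardinality stays below $\mathfrak{c}$: naively indexing by all triples $(I, J, B_0)$ gives up to $\mathfrak{c}$ requirements, overshooting the Martin's Axiom budget. I expect to overcome this by parameterising dense sets through finite combinatorial witnesses drawn from $E$ (of which there are at most $|E| < \mathfrak{c}$) and from the fewer than $\mathfrak{c}$ class representatives comprising $\mathcal{F}_0$, and then arguing via the finitary hypothesis on $M$ that meeting these canonical dense sets suffices to deduce \ref{item: mino axiom works} in full. Granted this reduction, Martin's Axiom applied to the $\mathsf{ccc}$ poset $P$ with this sub-$\mathfrak{c}$ family of dense sets yields the required filter, producing the generalised truncation $N$ with $\mathcal{B}(N) \supseteq \mathcal{F}_0$.
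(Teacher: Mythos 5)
There is a genuine gap, and it is exactly the one you flag at the end without resolving. Condition \Cref{lem:wildcharacter}\ref{item: mino axiom works} imposes one requirement for each pair ${I \subseteq J}$ of $M$-independent sets, and there are ${2^{\abs{E}} = \mathfrak{c}}$ such pairs (by \Cref{clm: card ar under Martin}), while Martin's Axiom only produces a filter meeting fewer than $\mathfrak{c}$ dense sets; so a single application of MA to your poset cannot secure \ref{item: mino axiom works}. The proposed repair --- ``canonical'' dense sets parameterised by finite subsets of $E$ and by representatives from $\mathcal{F}_0$ --- is a hope rather than an argument: the obligation attached to an infinite pair ${(I,J)}$ concerns which infinite sets lie between $I$ and $J$ and cannot be recovered from finite traces, and your dense sets ${D_{I,J,B_0}}$ are indexed by ``already-appearing'' bases $B_0$, which depend on the generic filter itself, so they are not well-defined inputs to MA; worse, bases added generically create new instances of \ref{item: mino axiom works} that a fixed family of dense sets chosen in advance never addresses. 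The paper's proof is organised precisely to dodge this: it runs a transfinite recursion of length $\mathfrak{c}$, enumerating all tasks ${(I_\alpha,J_\alpha)}$ so that each pair recurs cofinally, and at each successor step applies MA not to a global poset of finite families of bases but to the small poset ${\mathsf{Fn}(J_\alpha\setminus I_\alpha,2)}$, with only the ${<\mathfrak{c}}$ dense sets ${C_{B,n}}$, ${D_{B,n}}$ ranging over the class representatives accumulated so far, to manufacture a single new base ${B_\alpha}$ with ${I_\alpha\subseteq B_\alpha\subseteq J_\alpha}$ whose class is $\trianglelefteq$-incomparable with everything present; tasks created by the new class are handled at later stages because each pair appears unboundedly often.

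A second problem is that your poset is most likely not $\mathsf{ccc}$, so the $\Delta$-system sketch does not suffice. Already for the free matroid on a countable ground set, take a tower ${\langle T_\alpha\colon \alpha<\omega_1\rangle}$ under almost inclusion with infinite differences, placed so that each ${A_\alpha \coloneqq T_\alpha\cup C}$ (for a suitable fixed infinite $C$) is $\trianglelefteq$-incomparable with the representatives of $\mathcal{F}_0$: each singleton ${\{A_\alpha\}}$ is then a condition, but for ${\alpha<\beta}$ a finite equal-size modification of $A_\alpha$ inside your closure ${\overline{\{A_\alpha\}}}$ is a proper subset of $A_\beta$, so ${\{A_\alpha\}}$ and ${\{A_\beta\}}$ have no common extension satisfying \ref{item: exchange works}; since $\omega_1$-towers exist in ZFC, this is an uncountable strong antichain. (Your preliminary observations are fine: $\mathcal{F}_0$ does satisfy \ref{item: indep}--\ref{item: exchange works}, which corresponds to the paper's invariant, and the degenerate cases where $\mathcal{F}_0$ contains a finite set or a set of finite corank should be split off and handled directly via \Cref{lem: finite and almost base}, as the paper does.)
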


\begin{proof}
    Without loss of generality, we may assume that~$\mathcal{F}_0$ is non-empty, as otherwise we just add the equivalence class of an 
    arbitrary~${I \in \mathcal{I} \coloneqq \mathcal{I}(M)}$. 
    If~${\mathcal{F}_0}$ contains a finite set~$I$ of size~$n$, then~${[I] \subseteq \mathcal{F}_0}$ as~$\mathcal{F}_0$ is closed under~$\sim$ by assumption. 
    Since~$[I]$ is a $\trianglelefteq$-smallest equivalence class and $\mathcal{F}_0$ is the union of pairwise $\trianglelefteq$-incomparable 
    equivalence classes, we must have~${\mathcal{F}_0 = [I]}$. 
    Since~$[I]$ is the set of bases of the $n$-truncation of~$M$ (see \Cref{lem: finite and almost base}), ${N \coloneqq (E,\mathcal{F}_0)}$ is suitable. 
    Similarly, suppose that~$\mathcal{F}_0$ contains an~$I$ with~${r(M/I) < \infty}$. 
    Then~${\mathcal{F}_0 = [I]}$ because~$[I]$ is a $\trianglelefteq$-largest equivalence class. 
    Since~$[I]$ is either~$\mathcal{B}(M)$ or the set of bases of the $(-n)$-truncation of~$M$ (see \Cref{lem: finite and almost base}), ${N \coloneqq (E,\mathcal{F}_0)}$ is suitable. 
    
    Suppose now that~$\mathcal{F}_0$ contains neither finite sets nor co-finite subsets of bases. 
    We define an increasing continuous sequence~${\left\langle \mathcal{F}_\alpha \colon\ \alpha < \mathfrak{c}  \right\rangle}$ with the intension that~${\mathcal{F} \coloneqq \bigcup_{\alpha < \mathfrak{c}} \mathcal{F}_{\alpha}}$ 
    is the set of bases of a generalised truncation~$N$ of~$M$. 
    Let~$\mathcal{T}$ consists of the ordered pairs~${(I,J)}$ of $M$-independent sets such that~$I$ is a co-infinite subset 
    of~$J$. 
    We think of a pair~${(I, J) \in \mathcal{T}}$ as the ``task'' that, if the bases of~$N$ we constructed so far contains a superset~$B$ of~$I$, to 
    add a~$B'$ for which either~${I \subseteq B' \subseteq J}$ or~${B' \supseteq J}$ in order to not violate~\ref{item: mino axiom works} of 
    \Cref{lem:wildcharacter}. 
    Let~${\left\langle (I_\alpha,J_{\alpha}) \colon\ \alpha < \mathfrak{c} \right\rangle}$ be a sequence with range~$\mathcal{T}$ in which every~${(I,J) \in \mathcal{T}}$ appears unbounded often. 
    
    For~${\alpha < \mathfrak{c}}$, we maintain: 
    \begin{enumerate}[label=(\roman*)]
        \item\label{item: Falpha indep} ${\mathcal{F}_\alpha \subseteq \mathcal{I}}$;
        \item\label{item: alpha tasks} $\mathcal{F}_\alpha$ satisfies the restriction of 
        \Cref{lem:wildcharacter}\ref{item: mino axiom works} to the pairs~${\{ (I_\beta, J_{\beta}) \colon\ \beta < \alpha \}}$;
        \item\label{item: incomp} if~${B,B' \in \mathcal{F}_\alpha}$ with ${B \not \sim B'}$, then~$B$ and~$B'$ are $\trianglelefteq$-incomparable; and
        \item\label{item: closed under equiv} ${\mathcal{F}_{\alpha+1} \setminus \mathcal{F}_{\alpha}}$ is either empty or an equivalence class of~$\sim$.
    \end{enumerate}
    
    Clearly, $\mathcal{F}_0$ as given by the input of the theorem is suitable. 
    Moreover, these conditions cannot be ruined at limit steps. 
    Suppose that~$\mathcal{F}_\alpha$ is defined for some~${\alpha < \mathfrak{c}}$ and satisfies the conditions. 
    
    If~$\mathcal{F}_\alpha$ satisfies the restriction of 
    \Cref{lem:wildcharacter}\ref{item: mino axiom works} to the pairs~${\{ (I_\beta, J_{\beta}) \colon\ \beta<\alpha+1 \}}$, then we let~${\mathcal{F}_{\alpha+1} \coloneqq \mathcal{F}_\alpha}$. 
    Suppose it does not. 
    
    \begin{claim}
        \label{clm: not spanned by I alpha}
        There is no~${B \in \mathcal{F}_\alpha}$ with~${B  \trianglelefteq I_\alpha}$.
    \end{claim}
    
    \begin{proofofclaim}
        Suppose for a contradiction that there is a~${B_0 \in \mathcal{F}_\alpha}$ with~${B_0  \trianglelefteq I_\alpha}$. 
        We know that there is a~${B_1 \in \mathcal{F}_\alpha}$ that includes~$I_\alpha$, since otherwise~$\mathcal{F}_\alpha$ satisfies the task~${(I_\alpha, J_\alpha)}$.  
        Clearly, ${B_0 \trianglelefteq B_1}$. 
        Then property \ref{item: incomp} ensures that~${B_0 \sim B_1}$ and hence~${B_1 \trianglelefteq I_\alpha}$. 
        It follows that ${B_1 \setminus  I_\alpha}$ must be finite. 
        Take a set~$B$ with~${I_\alpha \subseteq B \subseteq J_\alpha}$ such that~${\abs{B \setminus I_\alpha} = \abs{B_1 \setminus I_\alpha}}$. 
        Then~${B \sim B_1}$ by \Cref{lem: almost same} and hence~${B \in \mathcal{F}_\alpha}$ as~$\mathcal{F}_{\alpha}$ is closed under~$\sim$ by~\ref{item: closed under equiv}. 
        Since ${I_\alpha \subseteq B \subseteq J_\alpha}$, we conclude that~$\mathcal{F}_\alpha$ satisfies the task~${(I_\alpha, J_\alpha)}$, a contradiction. 
    \end{proofofclaim}
    
    \begin{claim}
        \label{clm: no J alpha span}
        There is no~${B \in \mathcal{F}_\alpha}$ with~${J_\alpha  \trianglelefteq B}$.
    \end{claim}
    
    \begin{proofofclaim}
        Suppose for a contradiction that there is a~${B \in \mathcal{F}_\alpha}$ with~${J_\alpha \trianglelefteq B}$. 
        Then there is an $M$-independent~${B' \supseteq B}$ 
        with~${n \coloneqq \abs{B' \setminus B} < \aleph_0}$ and~${J_\alpha  \subseteq \mathsf{span}_M(B') \eqqcolon X}$. 
        Let~$J_\alpha'$ be a base of~${M{\upharpoonright}X}$ that extends~$J_\alpha$. 
        Then any~${J \subseteq J_\alpha'}$ with~${\abs{J_\alpha' \setminus J} = n}$ is strongly equivalent to~$B$ (apply the second part of \Cref{lem:strongcharacter} with~$B$, $J$, and~$X$). 
        If~${\abs{J_{\alpha}' \setminus J_\alpha} \geq n}$, 
        then this leads to a~$J$ with~${J \sim B}$ and~${J \supseteq J_\alpha}$. 
        If~${\abs{J_{\alpha}' \setminus J_\alpha} < n }$, then this leads to a~$J$ with~${J \sim B}$ and~${I_\alpha \subseteq J \subsetneq J_\alpha}$. 
        Since~$\mathcal{F}_\alpha$ is closed under strong equivalence (see 
        \ref{item: closed under equiv}), it follows that~${J \in \mathcal{F}_\alpha}$. 
        In both cases, $\mathcal{F}_\alpha$ satisfies the task~${(I_\alpha, J_\alpha)}$, a contradiction.
    \end{proofofclaim}

    Let~${P \coloneqq \mathsf{Fn}(J_\alpha \setminus I_\alpha, 2)}$, i.e.~the poset of functions whose domain is a finite subset of~${J_\alpha \setminus I_\alpha}$ and whose range is a subset of~${\{ 0,1 \}}$, ordered by~$\supseteq$. 
    We pick a transversal~$\mathcal{R}$ of the equivalence classes included in~$\mathcal{F}_\alpha$. 
    Since~${\abs{\mathcal{F}_0} < \mathfrak{c}}$ by assumption, \ref{item: closed under equiv} ensures that~${\abs{\mathcal{R}} < \mathfrak{c}}$. 
    Let 
    \[
        \mathcal{R}_{I_\alpha} 
            \coloneqq \{ B \in \mathcal{R} \colon\  I_\alpha  \trianglelefteq B\}
        \quad\textnormal{ and }\quad
        \mathcal{R}^{J_\alpha} 
            \coloneqq \{ B \in \mathcal{R} \colon\ B \trianglelefteq J_\alpha \}.
    \]
    For~${B \in \mathcal{R}_{I_\alpha}}$ and~${n < \omega}$, let~${C_{B,n} \coloneqq \{ p \in P \colon\ r_{M}(p^{-1}(1)|B)\geq n \}}$. 
    For~${B \in \mathcal{R}^{J_\alpha}}$ and~${n < \omega}$, let~${D_{B,n} \coloneqq \{ p \in P \colon\ r_{M}(B| J_\alpha \setminus p^{-1}(0)) \geq n 
    \}}$. 
        
    \begin{claim}
         Each element of~${\mathcal{D} \coloneqq \{C_{B,n}, D_{B',n} \colon\ n < \omega, B \in \mathcal{R}_{I_\alpha}, B' \in \mathcal{R}^{J_\alpha} \}}$ is dense in~$P$.
     \end{claim}
     
     \begin{proofofclaim}
        Let~${p \in P}$ and~${n < \omega}$ be given. 
        Take a~${B \in \mathcal{R}_{I_\alpha}}$. 
        Then~${J_\alpha \setminus I_\alpha \not  \trianglelefteq B}$, since otherwise~${J_\alpha \trianglelefteq B}$ which contradicts \Cref{clm: no J alpha span}. 
        It follows that there is an infinite~${J \subseteq J_\alpha \setminus I_\alpha}$ with~${J \cap B = \emptyset}$ for which~${J \cup B \in \mathcal{I}}$. 
        Let~${e_0, \dots, e_{n-1} \in J \setminus \mathsf{dom}(p)}$. 
        For the function ${q \coloneqq p \cup \{ \left\langle e_i, 1  \right\rangle \colon\ i < n \}}$, we have~${q \leq p}$ and~${q \in C_{B,n}}$. 
        
        Take a~${B \in \mathcal{R}^{J_\alpha}}$. 
        Let~${B'}$ be a maximal ${M/I_\alpha}$-independent subset of~$B$. 
        \Cref{clm: not spanned by I alpha} guarantees that~$B'$ is infinite. 
        Then, by applying \Cref{lem: ugly} to~${M/I_\alpha}$ with ${I = B'}$, ${J = J_\alpha \setminus I_\alpha}$, and~${J' = \mathsf{dom}(p)}$, we obtain a finite~${F \subseteq J_\alpha \setminus I_\alpha \subseteq J_\alpha}$ with~${\mathsf{dom}(p) \cap F = \emptyset}$ such that ${r_M(B | J_\alpha \setminus F ) \geq r_M(B' | (J_\alpha \setminus I_\alpha) \setminus F) \geq n}$. 
        Then~${q \coloneqq p \cup \{ \left\langle e, 0 \right\rangle \colon\ e\in F \} \in D_{B,n}}$. 
     \end{proofofclaim}
     
     Since~${\abs{\mathcal{D}} < \mathfrak{c}}$ as~${\abs{\mathcal{R}} < \mathfrak{c}}$ and~$P$ is known to be~$\mathsf{ccc}$ (see \cite[Lemma 14.35]{jech2002set}), Martin's 
     Axiom guarantees that there exists a 
     filter~$F$ in~$P$  that has non-empty intersection with every element of~$\mathcal{D}$. 
     We set~${B_\alpha \coloneqq I_\alpha \cup \bigcup_{p \in F} p^{-1}(1)}$ and~${\mathcal{F}_{\alpha+1} \coloneqq \mathcal{F}_\alpha \cup [B_\alpha]}$. 
     
     \begin{claim}
          $\mathcal{F}_{\alpha+1}$ maintains the conditions \ref{item: Falpha indep}-\ref{item: closed under equiv}. 
      \end{claim}
      
    \begin{proofofclaim}
        We only show that \ref{item: incomp} remains true, the rest are straightforward from the construction. 
        We may assume that~${\mathcal{F}_{\alpha+1}\neq \mathcal{F}_{\alpha}}$, since otherwise there is nothing to prove. 
        Then~${\mathcal{F}_{\alpha+1} \setminus \mathcal{F}_{\alpha} = [B_\alpha]}$ by construction. 
        Let~${B \in \mathcal{F}_\alpha}$ be given. 
        We may assume without loss of generality that~${B \in \mathcal{R}}$ 
        because the relation~$\trianglelefteq$ is compatible with~$\sim$ (\Cref{obs:compatible}). 
    
        First we show that~${B_\alpha \not \trianglelefteq B}$. 
        If~${B \in \mathcal{R} \setminus \mathcal{R}_{I_\alpha}}$, then, by definition, ${I_\alpha \not \trianglelefteq B}$ and hence ${B_\alpha \not \trianglelefteq B}$ because~${B_\alpha \supseteq I_\alpha}$. 
        If~${B \in \mathcal{R}_{I_\alpha}}$, 
        then let~${n<\omega}$ be arbitrary and let~${p \in F \cap C_{B,n}}$.  Then ${r_{M}(B_\alpha| B) \geq r_{M}(p^{-1}(1)|B ) \geq n}$. 
        Since~${n < \omega}$ was arbitrary, ${r_{M}(B_\alpha| B) = \infty}$ follows, which means~${B_\alpha \not \trianglelefteq B}$. 
        
        We turn to the proof of~${B \not \trianglelefteq B_\alpha}$. 
        If~${B \in \mathcal{R} \setminus \mathcal{R}^{J_\alpha}}$, then, by definition, ${B \not \trianglelefteq J_\alpha}$ and hence~${B \not \trianglelefteq B_\alpha}$ since~${B_\alpha \subseteq J_\alpha}$. 
        If~${B \in \mathcal{R}^{J_\alpha}}$, then let~${n < \omega}$ be arbitrary and let~${p \in F \cap D_{B,n}}$. 
        Then~${J_\alpha \setminus p^{-1}(0) \supseteq B_\alpha}$ and hence
        ${r_{M}(B |B_{\alpha}) \geq r_{M}(B| J_\alpha \setminus p^{-1}(0)) \geq n}$. 
        Since~${n < \omega}$ was arbitrary, ${r_{M}(B |B_\alpha) = \infty}$ follows, which means~${B \not \trianglelefteq B_\alpha}$. 
    \end{proofofclaim}
    
    \begin{claim}
        $\mathcal{F} \coloneqq \bigcup_{\alpha<\mathfrak{c}}\mathcal{F}_\alpha$ is the bases of a generalized truncation of~$M$. 
    \end{claim}
    
    \begin{proofofclaim}
        We check the condition given in \Cref{lem:wildcharacter}. 
        Clearly, ${\mathcal{F} \neq \emptyset}$ because we assumed that~${\mathcal{F}_0 \neq \emptyset}$. 
        We have~${\mathcal{F} \subseteq \mathcal{I}}$ by construction. 
        Therefore, \ref{item: indep} holds. 
        If~${B, B'\in \mathcal{I}}$ with~${\abs{B\setminus B'} = \abs{B'\setminus B} < \aleph_0}$, then~${B \sim B'}$ by \Cref{lem: almost same}. 
        The set~$\mathcal{F}$ is closed under strong equivalence because~$\mathcal{F}_0$ is and we maintained \ref{item: closed under equiv}. 
        Thus \Cref{lem: almost same} implies that in particular \ref{item: class} holds. 
        Preserving~\ref{item: incomp} implies that~\ref{item: exchange works} is satisfied. 
        To check \ref{item: mino axiom works}, let ${I, J \in \mathcal{I}}$ with~${I \subseteq J}$ and suppose that there is a~${B \in \mathcal{F}}$ with~${I \subseteq B}$.
        If~${\abs{J \setminus I} < \aleph_0}$, then \Cref{lem: almost same} provides a~$B'$ with~${B' \sim B}$ such that either~${I \subseteq B' \subseteq J}$ or~${B' \supsetneq J}$ depending on if~${\abs{B \setminus I} \leq \abs{J \setminus I}}$ or~${\abs{B \setminus I} > \abs{J \setminus I}}$. 
        By \ref{item: class}, we have~${B' \in \mathcal{F}}$. 
        Suppose that~${\abs{J \setminus I} \geq \aleph_0}$. 
        Let~${\alpha < \mathfrak{c}}$ be an ordinal such that~${B \in \mathcal{F}_\alpha}$. 
        Then, by construction, there is a~${\beta > \alpha}$ such that~${(I_\beta,J_\beta) = (I,J)}$. 
        But then \ref{item: alpha tasks} ensures that~$\mathcal{F}_{\beta+1}$ contains a~$B'$ such that either~${I \subseteq B' \subseteq J}$ or~${B' \supseteq J}$. 
    \end{proofofclaim}
    \noindent
    This concludes the proof of the theorem. 
\end{proof}

\Cref{thm: main Martin} then follows by taking an infinite and co-infinite subset~$B$ of a base of~$M$ and applying \Cref{thm: main Martin extra} with~${\mathcal{F}_0 \coloneqq [B]}$. 
Indeed, for the resulting matroid~$N$, ${B \in \mathcal{B}(N)}$ ensures that~$N$ is a wild generalised truncation of~$M$. 
 
\begin{corollary}
    If Martin's Axiom holds, then every finitary matroid~$M$ of infinite rank on a ground set~$E$ with~${\abs{E} < \mathfrak{c}}$ has exactly~$2^{\mathfrak{c}}$ pairwise non-isomorphic wild truncations. 
\end{corollary}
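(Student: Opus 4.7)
The upper bound is a direct cardinal arithmetic count. As $M$ has infinite rank, $\aleph_0\le|E|<\mathfrak{c}$, so \Cref{clm: card ar under Martin} gives $|\mathcal{P}(E)|=2^{|E|}=\mathfrak{c}$. Any matroid on~$E$ is determined by its set of bases, which is a subset of~$\mathcal{P}(E)$, so there are at most $2^{|\mathcal{P}(E)|}=2^{\mathfrak{c}}$ matroids on~$E$, and \emph{a fortiori} at most $2^{\mathfrak{c}}$ pairwise non-isomorphic wild generalised truncations of~$M$.

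For the matching lower bound I would first reduce to distinctness. The symmetric group on~$E$ has cardinality at most $|E|^{|E|}=2^{|E|}=\mathfrak{c}$, so each isomorphism class of matroids on~$E$ contains at most $\mathfrak{c}$ members. Since cardinal arithmetic gives $\mathfrak{c}<2^{\mathfrak{c}}$, a family of fewer than $2^{\mathfrak{c}}$ isomorphism classes has total size less than $2^{\mathfrak{c}}\cdot\mathfrak{c}=2^{\mathfrak{c}}$; hence exhibiting $2^{\mathfrak{c}}$ pairwise distinct wild generalised truncations of~$M$ already forces $2^{\mathfrak{c}}$ isomorphism classes among them.

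To produce $2^{\mathfrak{c}}$ distinct truncations, the plan is to exploit the inherent freedom in the inductive construction of \Cref{thm: main Martin extra}. Seed with ${\mathcal{F}_0 \coloneqq [B]}$ for an infinite and co-infinite subset~$B$ of a base of~$M$. At each successor stage $\alpha<\mathfrak{c}$ that requires action, the new equivalence class $[B_\alpha]$ is extracted from a generic filter on the $\mathsf{ccc}$ poset ${P_\alpha=\mathsf{Fn}(J_\alpha\setminus I_\alpha,2)}$ meeting fewer than $\mathfrak{c}$ many dense sets. Because $P_\alpha$ has uncountable antichains and the list of dense sets leaves ample room, one can force a binary branch: pick two mutually incompatible conditions and extend each to a generic filter along the required dense sets, producing two candidate classes $[B_\alpha^0]$ and $[B_\alpha^1]$ that are $\trianglelefteq$-incomparable both with each other and with every class committed so far. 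Branching at every such stage yields a tree indexed by $\{0,1\}^{\mathfrak{c}}$, and each of its $2^{\mathfrak{c}}$ branches terminates in a wild generalised truncation by the same verification as in \Cref{thm: main Martin extra}.

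The main obstacle is showing that different branches actually yield different final sets~$\mathcal{F}$. If two branches diverge first at stage~$\alpha$, one choosing $[B_\alpha^0]$ and the other $[B_\alpha^1]$, then property~\ref{item: incomp} maintained throughout the construction guarantees that $[B_\alpha^0]$ is $\trianglelefteq$-incomparable with every class ever added in the $[B_\alpha^1]$-branch; hence $[B_\alpha^0]$ never enters that branch's final~$\mathcal{F}$, and the two truncations differ. Carrying out the branching so that both candidate classes at every stage genuinely satisfy this $\trianglelefteq$-incomparability with all commitments on either side, while simultaneously preserving the bookkeeping of invariants \ref{item: Falpha indep}--\ref{item: closed under equiv}, is the technical heart of the argument.
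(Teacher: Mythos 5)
Your upper bound and your reduction from ``pairwise distinct'' to ``pairwise non-isomorphic'' (via the at most $\mathfrak{c}$ permutations of $E$) are exactly the paper's arguments and are fine. The paper's lower bound, however, does not branch inside the transfinite recursion: it varies the seed, fixing infinite sets ${B_{n,0} \subsetneq B_{n,1} \subseteq B}$ with the $B_{n,1}$ pairwise disjoint and applying \Cref{thm: main Martin extra} to ${\mathcal{F}_s = \bigcup_n [B_{n,s(n)}]}$ for ${s \in 2^\omega}$; its device for telling the outputs apart is \emph{comparability} of the two candidate sets at each index, since no matroid has two $\subseteq$-comparable bases.

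This points to the genuine gap in your branching argument: the distinctness step is a non sequitur. You arrange $[B_\alpha^0]$ and $[B_\alpha^1]$ to be $\trianglelefteq$-incomparable with each other and with everything committed so far, and then claim that because invariant \ref{item: incomp} keeps every class of the $[B_\alpha^1]$-branch incomparable with $[B_\alpha^0]$, that branch never contains $[B_\alpha^0]$. But \ref{item: incomp} only states that the classes actually placed into $\mathcal{F}$ are pairwise incomparable; being incomparable with everything in the other branch is therefore precisely the condition under which $[B_\alpha^0]$ \emph{remains eligible} to be added to that branch at a later stage (for instance, some later generic class $[B_\beta]$ may simply coincide with it), so two branches may end with identical families. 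Relatedly, extending two incompatible conditions of $\mathsf{Fn}(J_\alpha \setminus I_\alpha, 2)$ to generic filters does not even guarantee ${[B_\alpha^0] \neq [B_\alpha^1]}$: incompatibility only forces the two generic sets to differ on a fixed finite set, which is compatible with strong equivalence by \Cref{lem: almost same}, and mutual $\trianglelefteq$-incomparability of the two generics cannot be arranged by dense sets formulated for each filter separately. What the argument needs is the opposite relation between the two candidates: inequivalent but $\trianglelefteq$-comparable choices, e.g.\ ${B_\alpha^0 \subsetneq B_\alpha^1 \subseteq J_\alpha}$ with ${r_M(B_\alpha^1 | B_\alpha^0) = \infty}$, each still incomparable with all previously committed classes and each completing the task; then \ref{item: incomp} really does exclude each candidate forever from the branch containing the other --- this is the role played by ${B_{n,0} \subsetneq B_{n,1}}$ in the paper. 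Producing such a pair generically requires a modified poset (or additional dense sets) building the two sets simultaneously and re-verifying the invariants \ref{item: Falpha indep}--\ref{item: closed under equiv} for both, and that is exactly what your sketch leaves open; as written, the lower bound is not established.
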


\begin{proof}
    First we show that there cannot be more. 
    To do so, it is enough to prove that there are at most~$2^{\mathfrak{c}}$ matroids on~$E$. 
    A matroid is uniquely determined by its bases and the set of bases is a subset of~$\mathcal{P}(E)$. 
    We have ${\abs{\mathcal{P}(E)} = 2^{\abs{E}} = \mathfrak{c}}$ by \Cref{clm: card ar under Martin}, 
    therefore~${\abs{\mathcal{P}(\mathcal{P}(E))} = 2^{\mathfrak{c}}}$ is indeed an upper bound for the number of all matroids on~$E$. 
    
    To find~$2^{\mathfrak{c}}$ wild truncations of~$M$, pick a~${B \in \mathcal{B}(M)}$. 
    For~${n < \omega}$ and~${i \in \{0,1\}}$, take an infinite set~${B_{n,i} \subseteq B}$ in such a way that~${B_{n,0} \subsetneq B_{n,1}}$ and~${B_{m,1} \cap B_{n,1} = \emptyset}$ for~${m \neq n}$. 
    Then, for every~${s \in 2^{\omega}}$, the set~${\mathcal{F}_s \coloneqq  \bigcup \{ [B_{n, s(n)}] \colon\ n < \omega\}}$ satisfies 
    the properties in the premise of \Cref{thm: main Martin extra} about~$\mathcal{F}_0$. 
    Moreover, for~${s \neq s'}$, ${\mathcal{F}_s \cup 
    \mathcal{F}_{s'}}$ cannot be extended to the set of bases of a matroid because it contains $\subseteq$-comparable elements. 
    It follows that applying \Cref{thm: main Martin extra} with~$\mathcal{F}_s$ and~$\mathcal{F}_{s'}$ results in different generalised truncations of~$M$. This shows that there are~$2^\mathfrak{c}$ 
    pairwise distinct generalised truncations. 
    Since there are~$\mathfrak{c}$ permutations of~$E$, among these matroids only~$\mathfrak{c}$ many can be isomorphic to each other. 
    Thus there must be~$2^{\mathfrak{c}}$ pairwise non-isomorphic generalised truncations of~$M$. 
\end{proof}

Our proof relies heavily on the assumption that matroid~$M$ is finitary. 
It seems natural to ask if it is possible to prove something for general matroids in a suitable setting. 

\begin{question}
    Is it consistent relative to ZFC that every matroid of infinite rank admits a wild generalised truncation?
\end{question}

\printbibliography

@article{bruhn2013axioms,
    AUTHOR = {Bruhn, Henning and Diestel, Reinhard and Kriesell, Matthias
              and Pendavingh, Rudi and Wollan, Paul},
     TITLE = {Axioms for infinite matroids},
   JOURNAL = {Adv. Math.},
  FJOURNAL = {Advances in Mathematics},
    VOLUME = {239},
      YEAR = {2013},
     PAGES = {18--46},
      ISSN = {0001-8708,1090-2082},
   MRCLASS = {05B35},
  MRNUMBER = {3045140},
MRREVIEWER = {Dillon\ Mayhew},
       DOI = {10.1016/j.aim.2013.01.011},
       URL = {https://doi.org/10.1016/j.aim.2013.01.011},
    EPRINT = {1003.3919},
EPRINTTYPE = {arxiv},
      NOTE = {},
}

@thesis{nathanhabil,
      TYPE = {Habilitation Thesis},
    AUTHOR = {Bowler, Nathan},
     TITLE = {Infinite matroids},
    SCHOOL = {University of Hamburg},
      YEAR = {2014},
       URL = {https://www.math.uni-hamburg.de/spag/dm/papers/Bowler_Habil.pdf},
      NOTE = {},
}

@article{bowler2014matroids,
    AUTHOR = {Bowler, Nathan and Carmesin, Johannes},
     TITLE = {Matroids with an infinite circuit-cocircuit intersection},
   JOURNAL = {J. Combin. Theory Ser. B},
  FJOURNAL = {Journal of Combinatorial Theory. Series B},
    VOLUME = {107},
      YEAR = {2014},
     PAGES = {78--91},
      ISSN = {0095-8956,1096-0902},
   MRCLASS = {05B35},
  MRNUMBER = {3213628},
MRREVIEWER = {Thomas\ Britz},
       DOI = {10.1016/j.jctb.2014.02.005},
       URL = {https://doi.org/10.1016/j.jctb.2014.02.005},
    EPRINT = {1202.3406},
EPRINTTYPE = {arxiv},
      NOTE = {},
}

@article{bowler2016self,
    AUTHOR = {Bowler, Nathan and Geschke, Stefan},
     TITLE = {Self-dual uniform matroids on infinite sets},
   JOURNAL = {Proc. Amer. Math. Soc.},
  FJOURNAL = {Proceedings of the American Mathematical Society},
    VOLUME = {144},
      YEAR = {2016},
    NUMBER = {2},
     PAGES = {459--471},
      ISSN = {0002-9939,1088-6826},
   MRCLASS = {05B35 (03E25 03E30 03E35 03E50)},
  MRNUMBER = {3430826},
MRREVIEWER = {Dillon\ Mayhew},
       DOI = {10.1090/proc/12667},
       URL = {https://www.math.uni-hamburg.de/home/geschke/papers/UniformMatroid7.pdf},
    EPRINT = {},
EPRINTTYPE = {},
      NOTE = {},
}

@article{higgs1969equicardinality,
    AUTHOR = {Higgs, Denis Arthur},
     TITLE = {Equicardinality of bases in {$B$}-matroids},
   JOURNAL = {Canad. Math. Bull.},
  FJOURNAL = {Canadian Mathematical Bulletin. Bulletin Canadien de
              Math\'ematiques},
    VOLUME = {12},
      YEAR = {1969},
     PAGES = {861--862},
      ISSN = {0008-4395,1496-4287},
   MRCLASS = {05.35},
  MRNUMBER = {253926},
MRREVIEWER = {R.\ A.\ Brualdi},
       DOI = {10.4153/CMB-1969-112-6},
       URL = {https://doi.org/10.4153/CMB-1969-112-6},
    EPRINT = {},
EPRINTTYPE = {},
      NOTE = {},
}

@article{higgs1969matroids,
    AUTHOR = {Higgs, Denis Arthur},
     TITLE = {Matroids and duality},
   JOURNAL = {Colloq. Math.},
  FJOURNAL = {Colloquium Mathematicum},
    VOLUME = {20},
      YEAR = {1969},
     PAGES = {215--220},
      ISSN = {0010-1354,1730-6302},
   MRCLASS = {05.35},
  MRNUMBER = {274315},
MRREVIEWER = {D.\ J. A. Welsh},
       DOI = {10.4064/cm-20-2-215-220},
       URL = {http://eudml.org/doc/267207},
    EPRINT = {http://eudml.org/doc/267207},
EPRINTTYPE = {url},
      NOTE = {},
}

@article{oxley1978infinite,
    AUTHOR = {Oxley, James G.},
     TITLE = {Infinite matroids},
   JOURNAL = {Proc. London Math. Soc. (3)},
  FJOURNAL = {Proceedings of the London Mathematical Society. Third Series},
    VOLUME = {37},
      YEAR = {1978},
    NUMBER = {2},
     PAGES = {259--272},
      ISSN = {0024-6115,1460-244X},
   MRCLASS = {05B35},
  MRNUMBER = {507607},
MRREVIEWER = {R.\ A.\ Brualdi},
       DOI = {10.1112/plms/s3-37.2.259},
       URL = {https://doi.org/10.1112/plms/s3-37.2.259},
    EPRINT = {},
EPRINTTYPE = {},
      NOTE = {},
}

@article{rado1966abstract,
    AUTHOR = {Rado, Richard},
     TITLE = {Abstract linear dependence},
   JOURNAL = {Colloq. Math.},
  FJOURNAL = {Colloquium Mathematicum},
    VOLUME = {14},
      YEAR = {1966},
     PAGES = {257--264},
      ISSN = {0010-1354,1730-6302},
   MRCLASS = {08.30},
  MRNUMBER = {184891},
MRREVIEWER = {G.\ Gr\"atzer},
       DOI = {10.4064/cm-14-1-257-264},
       URL = {https://doi.org/10.4064/cm-14-1-257-264},
}

@book{jech2002set,
    AUTHOR = {Jech, Thomas},
     TITLE = {Set theory},
   EDITION = {3rd millennium ed.},
 PUBLISHER = {Springer-Verlag, Berlin},
      YEAR = {2003},
     PAGES = {xiv+769},
      ISBN = {3-540-44085-2},
   MRCLASS = {03Exx (03-01 03-02)},
  MRNUMBER = {1940513},
MRREVIEWER = {Eva\ Coplakova},
       DOI = {10.1007/3-540-44761-X},
    EPRINT = {},
EPRINTTYPE = {},
      NOTE = {},
    SERIES = {Springer Monographs in Mathematics},
}

\end{document}